\newcommand{\field}[1]{\mathbb{#1}}
\newcommand{\R}{\field{R}}
\newcommand{\be}{\begin{equation}}
	\newcommand{\ee}{\end{equation}}
\newtheorem{theorem}{Theorem}
\newtheorem{lemma}[theorem]{Lemma}
\newtheorem{corollary}[theorem]{Corollary}
\newtheorem{proposition}[theorem]{Proposition}
\theoremstyle{remark}
\newtheorem{remark}[theorem]{Remark}
\theoremstyle{definition}
\begin{document}
	\begin{frontmatter}
		\title{On the Best Uniform Polynomial Approximation to the Checkmark Function }
		
		\author[FortWayne]{P. D. Dragnev \fnref{Dragnev}}
		\fntext[Dragnev]{The research of this author was supported in part by NSF grant DMS-1936543.}
		\ead{dragnevp@pfw.edu}
		
		\author[FortWayne]{A. R. Legg \corref{Legg}}
		\cortext[Legg]{Corresponding Author}
		\ead{leggar01@pfw.edu}
		
		\author[ULL]{R. Orive \fnref{Orive}}
		\fntext[Orive]{The research of this author was supported in part by Ministerio de Ciencia e Innovaci\'{o}n under grant MTM2015-71352-P,
			and the PFW Scholar-in-Residence program.}
		\ead{rorive@ull.es}
		
		\address[FortWayne]{Department of Mathematical Sciences, Purdue University Fort Wayne, Ft. Wayne, IN 46805}
		\address[ULL]{Departamento de An\'{a}lisis Matem\'{a}tico, Universidad de La Laguna, 38200, The Canary Islands, Spain }
		
		\date{\today}

		\begin{abstract}
			The best uniform polynomial approximation of the checkmark function $f(x)=|x-\alpha |$ is considered, as $\alpha$ varies in $(-1,1)$. For each fixed degree $n$, the minimax error $E_n (\alpha)$ is shown to be piecewise analytic in $\alpha$. In addition, $E_n(\alpha)$ is shown to feature $n-1$ piecewise linear decreasing/increasing sections, called V-shapes. The points of the alternation set are proven to be piecewise analytic and monotone increasing in $\alpha$ and their dynamics are completely characterized. We also prove a conjecture of Shekhtman that for odd $n$, $E_n(\alpha)$ has a local maximum at $\alpha=0$.
		\end{abstract}
		
	\end{frontmatter}
	
	\setcounter{equation}{0}
	\setcounter{theorem}{0}
	\section{Introduction}
	\label{intro}
	
	Our purpose is to study the best polynomial approximation (in the uniform norm) to the so--called checkmark function, i.e.
	\begin{equation}\label{Checkmarkfunction}
		f(x) = f(x;\alpha) = |x-\alpha|, \quad x\in [-1,1], \, \alpha \in (-1,1).
	\end{equation}
	
	Given a nonnegative integer $n$ and $\alpha \in (-1,1)$ we denote by $p_n(x) = p_n(x;\alpha)$ the best polynomial approximation of degree at most $n$ to the function $f(x)$ in \eqref{Checkmarkfunction}. This $p_n(x; \alpha)$ is known as the minimax polynomial and satisfies the condition
	\begin{equation}\label{defminimax}
		E_n(\alpha) := \|f - p_n\| = \min_{q\in \mathbb{P}_n}\,\|f - q\|\,,
	\end{equation}
	where $\mathbb{P}_n$ denotes the linear space of all polynomials of degree at most $ n$ and $\|\cdot\|$ denotes the uniform (or Chebyshev) norm in the interval $[-1,1]$.
	
	The study of the best polynomial or rational approximation to the particular case $f(x;0) = |x|$ is a classical problem in Approximation Theory (see Bernstein \cite{Bernstein1, Bernstein2}, Nikolsky \cite{N}, Petrushev and Popov \cite{PePo}, Stahl \cite{Stahl}, and Totik \cite{T} among many others), since it is one of the simplest nontrivial functions to be approximated. The case of $f(x;\alpha)$ is treated by Bernstein \cite{Bernstein2}. In these classical treatments of the problem, the focus is on asymptotic rates of approximation when the degrees of the polynomials (or rational functions) tend to infinity. From these studies it is known that as $n \to \infty$, the value $nE_n(\alpha) \to \sigma \sqrt{1-\alpha^2}$, where  $\sigma:=\lim_{n\to \infty} nE_n(0) \approx 0.28\dots$ is the Bernstein constant (see \cite{Bernstein1}).

	In contrast, our objective here is to consider the evolution of the solution to the minimax problem for fixed degree $n$ as $\alpha$ varies from $-1$ to $+1$. We will be especially concerned with the smoothness and behavior of $E_n(\alpha)$, and with the positions and phase transitions of the so-called `Chebyshev alternation points,' whose definition we now review.
	
	It is well known that the best polynomial approximation in the uniform norm to $f$ of degree at most $n$ is uniquely characterized by the following equioscillation property. There exist at least $n+2$ points $z_1, z_2, \cdots z_N$ in $[-1,1]$ such that the minimax error $E_n(\alpha)$ defined in \eqref{defminimax} is attained with alternating signs, that is,
	\begin{equation}\label{equioscil}
		e_n(z_i;\alpha) := f(z_i;\alpha) - p_n(z_i;\alpha) = \epsilon\,(-1)^i\,\|f - p_n\| = \epsilon\,(-1)^i\,E_n(\alpha)\,,
	\end{equation} 
	for $i=1,2, \cdots, N$, where $\epsilon = \pm 1\,.$ The points $z_i$ are called alternation points. 
	This characterization is the basis for the numerical algorithm to compute the minimax polynomial, which is commonly known as the Remez algorithm (see \cite{Remez} or \cite[Ch. 1]{PePo}).

	In \cite[Lemma 3]{DLT} it is proven that for the checkmark function the exact number of alternation points is either $n+2$ or $n+3$, with $\alpha$ being always an alternation point such that 
	\begin{equation}\label{p=E}
		p_n(\alpha;\alpha) = E_n(\alpha)\,.
	\end{equation}
	
	Hereafter, we denote by $u_i =u_i (\alpha)$ the alternation points located to the left of $\alpha$, and by $v_j=v_j (\alpha)$, those to the right. The $u_i$ and $v_j$ are enumerated by their distance to $\alpha$.  So, we write
	\begin{equation}\label{mathcal A}
		{\mathcal A}_n(\alpha) := \{u_{k+1} (\alpha)< \dots <u_1(\alpha) <  \alpha < v_1 (\alpha) < \dots < v_{\ell+1}(\alpha) \}\,,
	\end{equation} 
	where $k,\ell \geq 0$. In particular, for $\alpha$ not in a so-called ``V-shape" (see next paragraph) we know from \cite{DLT} that the alternation set $\mathcal{A}_n(\alpha)$ contains exactly $n+2$ alternation points, and $\{-1,\alpha, 1\} \subset \mathcal{A}_n(\alpha)$. In this case $u_{k+1}(\alpha)=-1$, $v_{\ell+1}(\alpha)=1$, and $k+\ell = n-1$. For convenience we define $u_0(\alpha):=\alpha=:v_0(\alpha)$.

	An interesting aspect of the error function $E_n(\alpha)$ as $\alpha$ varies from $-1$ to $1$ is that it develops what we call {\it V-shapes}. We say $E_n(\alpha)$ has a V-shape on the domain interval $[a,b]$ if there is a $c \in (a,b)$ such that $E_n(\alpha)$ is linear and decreasing on $[a,c]$, linear and increasing on $[c,b]$, and continuous on $[a,b].$ If $[a,b]$ is a {\it maximal} such interval, then we call the portion of the graph of $E_n(\alpha)$ over $[a,b]$ a V-shape, and we call the points $(a,E_n(a)),(b, E_n(b))$ in the graph of $E_n(\alpha)$ the {\it endpoints of the V-shape}. The point $(c,E_n(c))$ we call the {\it tip of the V-shape}. By a slight abuse of terminology we will also use the term {\it V-shape} in reference to the domain alone (instead of the graph). For instance, we might say $[a,b]$ is a V-shape, whose endpoints are $a,b$, and whose tip is $c$.
	
	  An important result previously established in \cite[Theorem 2]{DLT} asserts that for values of $\alpha$ close to $+1$ (or similarly to $-1$, by the symmetry of the problem), the error function $E_n(\alpha)$ can be described in terms of the classical Chebyshev polynomials. And if $(\alpha_3,1]$ is the largest interval on which this description by Chebyshev polynomials holds, then on a contiguous interval of values $\alpha \in [\alpha_1,\alpha_3]$, the graph of $E_n(\alpha)$ has a V-shape with tip $\alpha_2 \in (\alpha_1, \alpha_3).$ The proof is based on the fact that for such range of values of $\alpha$, the domain endpoint $x=+1$ is a soft--endpoint, that is, the minimax polynomial for $f$ in $[-1,1]$ is also the minimax polynomial for intervals of the form $[-1, b]$, for $b \geq b_0$, with $b_0 < 1$. This allows us to perform a simple linear transformation of the problem from which the existence of the V-shape arises. As noted in a Remark on p. 154 of \cite{DLT} when the alternation set has $n+3$ points for some $\alpha$ then this $\alpha$ is a tip of a V-shape for $E_n(\alpha)$.
	In fact, on p. 151 of \cite{DLT} it is conjectured  that the number of such V-shapes is exactly $n-1$. Our Theorem \ref{thm:vshapes} below establishes this conjecture.
	
	The simplest nontrivial cases are those of $n=2$ and $n=3$, which were studied in \cite{DLT}. It is illustrative to review these.  It is easy to plot $E_2(\alpha)$ and $E_3(\alpha)$ numerically as functions of $\alpha$, yielding Figure \ref{fig:2case}.
	\begin{figure}[h]
		\begin{center}
			\includegraphics[scale=0.27]{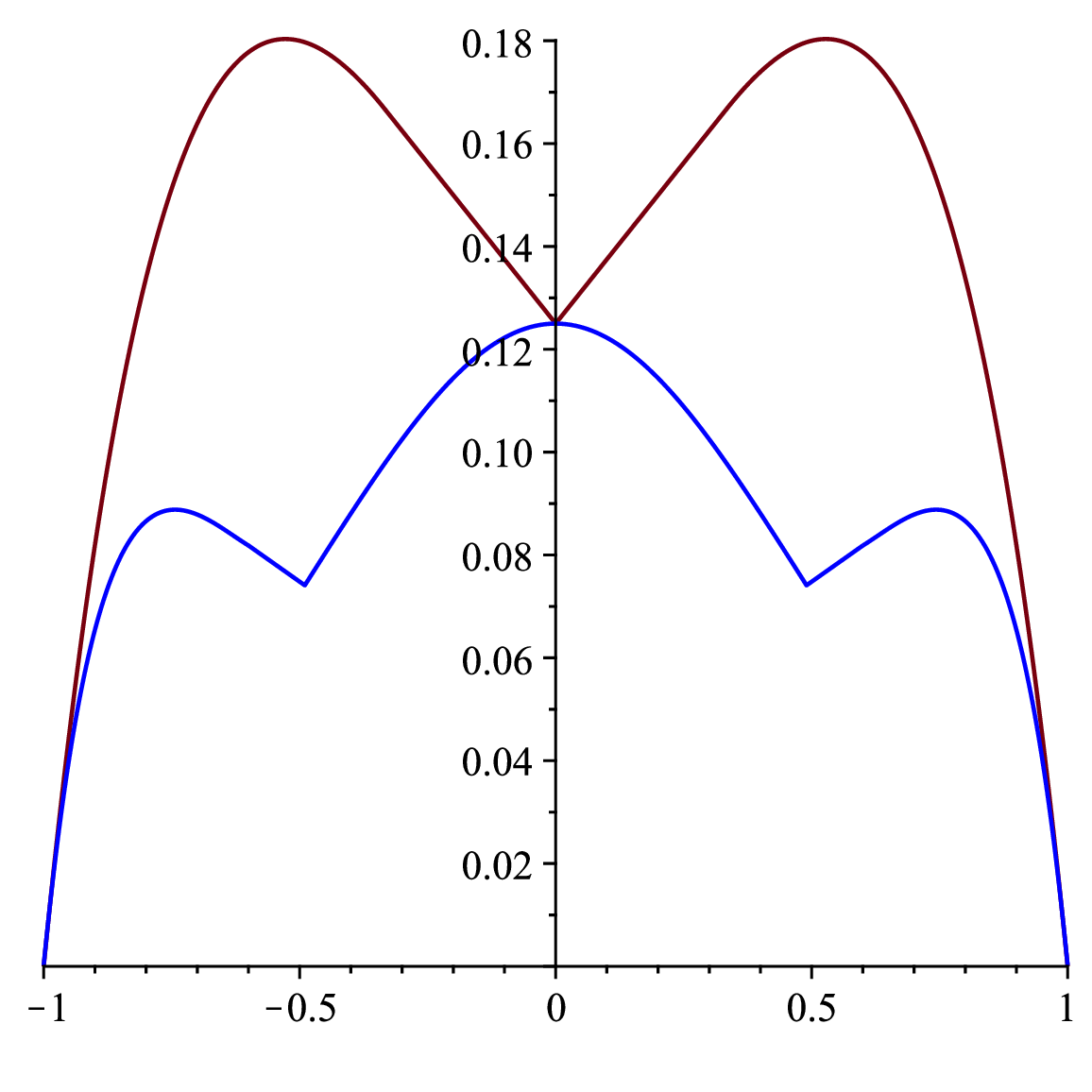}
		\end{center}
		\caption{The graphs of $E_2(\alpha)$ and $E_3(\alpha)$. }
		\label{fig:2case}
	\end{figure}
	Observe that the graph of $E_2(\alpha)$ has a V-shape in a symmetric interval about $\alpha=0$, where the ``tip'' of the V-shape is located. Outside the V-shape, $E_2(\alpha)$ can be determined using Chebyshev polynomials as mentioned above. The graph of $E_3(\alpha)$ is slightly more involved, showing two symmetric V-shapes, and now $\alpha=0$ is a local and in fact absolute maximum. At $\alpha=0$, note that the two graphs intersect, reflecting the fact that $p_2(x;0)=p_3(x;0)$. 
	
	In Figure \ref{fig:decreasing}, we numerically plot $E_n(\alpha)$ for $n=1,2, \cdots, 7.$ As $n$ increases, it appears that more and more V-shapes arise at whose tips the graphs of $E_n(\alpha)$ and $E_{n+1}(\alpha)$ coincide.  To illustrate the asymptotic rate of approximation, in Figure \ref{fig:asymp} we also show the plots of $n E_n(\alpha)$ for various values of $n$, noting that they tend to accumulate along $\sigma\sqrt{1-\alpha^2}$.
	\begin{figure}[h]
		\begin{center}
			\includegraphics[scale=.27]{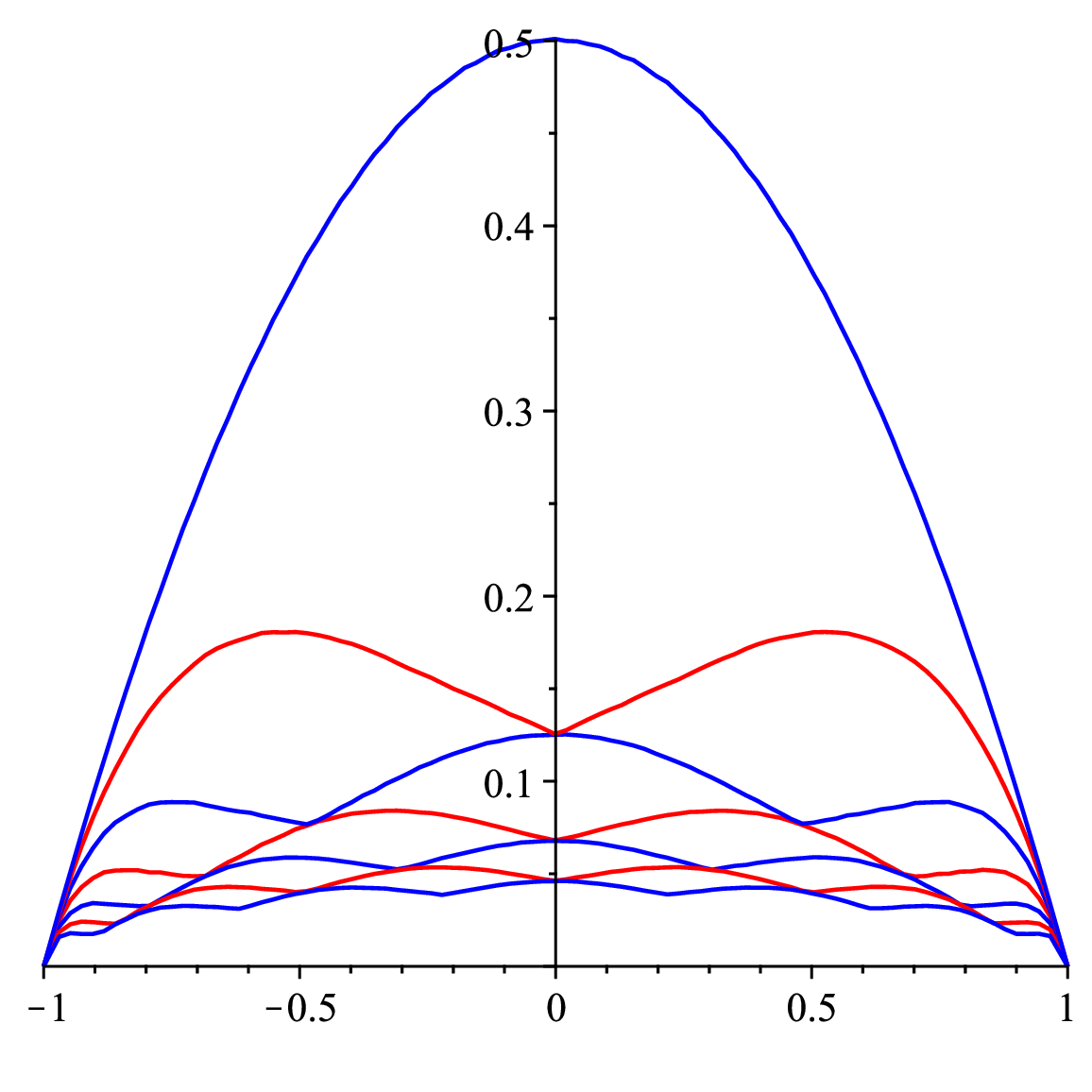}
		\end{center}
		\caption{The graphs of $E_n(\alpha)$, for $n=1,2,\ldots,7$. }
		\label{fig:decreasing}
	\end{figure}
	\begin{figure}
		\begin{center}
			\includegraphics[scale=.27]{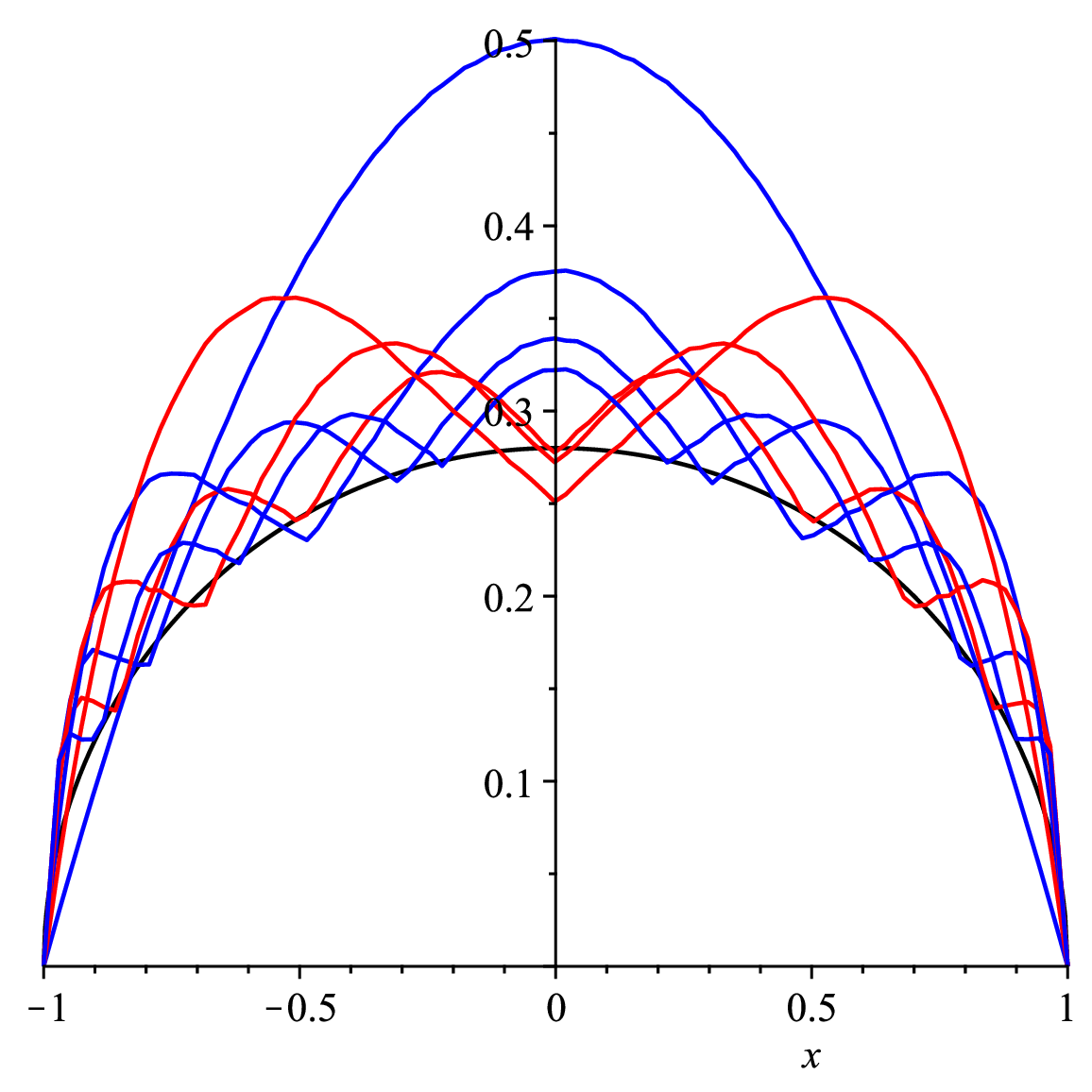}
		\end{center}
		\caption{The normalized graphs $nE_n(\alpha)$, for $n=1,2,\ldots,7$ and the limit $\sigma\sqrt{1-\alpha^2}$. }
		\label{fig:asymp}
	\end{figure}
	
	As we said, our main concern is the study of $E_n(\alpha)$ as $\alpha$ varies in $(-1,1)$, for a fixed nonnegative integer $n$. In the next section, the monotonicity of the alternation points as functions of the parameter $\alpha$ is established. In Section $3$ we show piecewise analyticity of the minimax error $E_n(\alpha)$ {and the alternation nodes}; the points where analyticity fails are the tips and endpoints of V-shapes.  We also prove a conjecture of Shekhtman \cite{Shekt} that $E_n(\alpha)$ has a local maximum at $\alpha=0$ when $n$ is odd. In Section 4, the conjecture about the exact number of V-shapes in the graph of $E_n(\alpha)$, posed in \cite{DLT}, is resolved, which also allows us to describe the phase transitions of the alternation points. These phase transitions are illustrated in Figure \ref{fig:phase} for $n=5$. 
	
	\setcounter{equation}{0}
	\setcounter{theorem}{0}
	\section{Monotonicity of the alternation points}

	We start our analysis by discovering monotonicity of the alternation points with respect to the parameter $\alpha$, where $\alpha$ is not a tip of a V-shape. First note the following fact about continuity.
	
	\begin{lemma}\label{lem:continuity}
		Given a nonnegative integer $n$, the minimax polynomial $p_n(x;\alpha)$ and the minimax error $E_n(\alpha)$ vary continuously in the parameter $\alpha \in [-1,1].$
	\end{lemma}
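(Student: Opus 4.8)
The plan is to establish continuity in two stages: first for the scalar error $E_n(\alpha)$, which is soft and elementary, and then for the minimax polynomial $p_n(\cdot;\alpha)$, where the essential input will be the uniqueness of the best approximation. The starting observation is an elementary Lipschitz estimate on the target functions themselves: by the reverse triangle inequality, for $\alpha, \beta \in [-1,1]$ and every $x \in [-1,1]$,
$$\bigl| \,|x-\alpha| - |x-\beta|\, \bigr| \leq |\alpha - \beta|,$$
so that $\|f(\cdot;\alpha) - f(\cdot;\beta)\| \leq |\alpha - \beta|$.

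Since $E_n(\alpha)$ is, by definition \eqref{defminimax}, the distance in the uniform norm from $f(\cdot;\alpha)$ to the fixed finite-dimensional subspace $\mathbb{P}_n$, and the distance to a fixed set is $1$-Lipschitz with respect to the norm, I immediately obtain
$$|E_n(\alpha) - E_n(\beta)| \leq \|f(\cdot;\alpha) - f(\cdot;\beta)\| \leq |\alpha - \beta|,$$
so $E_n$ is Lipschitz, and in particular continuous, on $[-1,1]$.

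For the polynomial $p_n(\cdot;\alpha)$ I would argue by compactness together with uniqueness. Fix $\alpha$ and let $\alpha_k \to \alpha$. Since $|x-\alpha_k| \leq 2$ on $[-1,1]$, taking $q=0$ in \eqref{defminimax} gives $E_n(\alpha_k) \leq 2$, whence $\|p_n(\cdot;\alpha_k)\| \leq \|f(\cdot;\alpha_k)\| + E_n(\alpha_k) \leq 4$. The polynomials $p_n(\cdot;\alpha_k)$ thus lie in a bounded subset of the finite-dimensional space $\mathbb{P}_n$, on which all norms are equivalent, so every subsequence admits a further subsequence converging uniformly on $[-1,1]$ to some $q \in \mathbb{P}_n$. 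Passing to the limit along such a subsequence and using the continuity of $E_n$ just established together with the uniform convergence $f(\cdot;\alpha_k) \to f(\cdot;\alpha)$, I find
$$\|f(\cdot;\alpha) - q\| = \lim_j \|f(\cdot;\alpha_{k_j}) - p_n(\cdot;\alpha_{k_j})\| = \lim_j E_n(\alpha_{k_j}) = E_n(\alpha),$$
so that $q$ is a best approximation to $f(\cdot;\alpha)$ from $\mathbb{P}_n$.

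The crux is the final uniqueness step: because $\mathbb{P}_n$ is a Haar (Chebyshev) system on $[-1,1]$, the minimax polynomial is unique, forcing $q = p_n(\cdot;\alpha)$. Every subsequence of $p_n(\cdot;\alpha_k)$ therefore has a further subsequence converging to the single limit $p_n(\cdot;\alpha)$, which compels the whole sequence to converge to $p_n(\cdot;\alpha)$; as $\alpha_k \to \alpha$ was arbitrary, continuity follows. I expect this uniqueness to be the indispensable ingredient, since without it the candidate limit $q$ could not be identified and the full-sequence convergence would fail; the remaining ingredients (the $1$-Lipschitz property of distance to a subspace, the uniform boundedness of the $p_n(\cdot;\alpha_k)$, and compactness in finite dimensions) are entirely routine.
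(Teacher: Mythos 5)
Your proof is correct and is essentially the same argument the paper relies on: the paper's entire proof is a one-line citation to the continuity of the best-approximation operator onto finite-dimensional subspaces (DeVore--Lorentz, Th.~1.2, p.~60), and the standard proof of that cited result is precisely your combination of the $1$-Lipschitz distance estimate, compactness of bounded sets in $\mathbb{P}_n$, and uniqueness of the minimax polynomial via the Haar property. Nothing is missing; you have simply unpacked the citation.
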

	The proof follows from the continuity of the best approximation operator for subspaces of finite dimension  (see e.g. \cite[Th. 1.2, p. 60]{deVore}).

	\begin{lemma}\label{MVL}
		Suppose $f(x)$ is a nonconstant continuous function on $[a,b]$ and differentiable on $(a,b)$ and that $f(a)\geq 0 \geq f(b)$ {\rm (}resp. $f(a)\leq 0 \leq f(b)${\rm)}. Then there is a point $z \in (a,b)$ such that $f' (z)<0$  {\rm (}resp. $f' (z)>0${\rm)}.
	\end{lemma}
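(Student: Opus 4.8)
The plan is to reduce everything to the classical Mean Value Theorem, the only complication being the requirement of a \emph{strict} inequality $f'(z)<0$ rather than $f'(z)\leq 0$. I will prove the first assertion (the case $f(a)\geq 0\geq f(b)$); the parenthetical statement then follows at once by applying the result to $-f$. Since the hypothesis $f(a)\geq 0\geq f(b)$ forces $f(a)\geq f(b)$, there are exactly two cases to consider, according to whether this last inequality is strict.

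If $f(a)>f(b)$, then the Mean Value Theorem immediately produces a point $z\in(a,b)$ with
\[
f'(z)=\frac{f(b)-f(a)}{b-a}<0,
\]
and we are done. The work is therefore concentrated in the degenerate case $f(a)=f(b)$, where the pinching $f(a)\geq 0\geq f(b)$ forces $f(a)=f(b)=0$ and a direct application of the Mean Value Theorem yields only $f'(z)\leq 0$. This is where the nonconstancy hypothesis becomes essential: since $f$ is nonconstant and vanishes at both endpoints, there exists $x_0\in(a,b)$ with $f(x_0)\neq 0$.

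I would then split according to the sign of $f(x_0)$. If $f(x_0)>0$, I apply the Mean Value Theorem on the subinterval $[x_0,b]$, where $f(x_0)>0=f(b)$, obtaining $z\in(x_0,b)$ with $f'(z)<0$. If instead $f(x_0)<0$, I apply it on $[a,x_0]$, where $f(a)=0>f(x_0)$, obtaining $z\in(a,x_0)$ with $f'(z)<0$. In either subcase the strict inequality is recovered, which completes the proof.

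The only real obstacle here is recognizing that the naive single application of the Mean Value Theorem is insufficient precisely when the boundary values coincide, and that the hypothesis forbidding $f$ from being constant is exactly what is needed to isolate a strictly monotone subinterval on which the strict sign of the derivative is guaranteed.
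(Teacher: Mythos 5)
Your proof is correct and takes the same route as the paper, which simply states that the lemma ``is a consequence of the Mean Value Theorem'' without supplying details; your case analysis (handling $f(a)=f(b)=0$ via the nonconstancy hypothesis and a subinterval) fills in exactly the argument the authors leave implicit.
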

	
	\begin{proof} This is a consequence of the Mean Value Theorem.
	\end{proof}
	
	In what follows, it will be useful to have a notation for a normalized error function. For each $\alpha \in (-1,1)$, let us define the function
	\begin{equation}\label{g alpha}
		g_{\alpha}(x) := \,\frac{p_n(x;\alpha) - |x - \alpha|}{E_n(\alpha)}\,.
	\end{equation}
	Thus, $g_{\alpha}$ is normalized in such a way that $|g_{\alpha}(x)|\leq 1\,,\,x\in [-1,1]\,.$ 
	
	\begin{theorem}\label{thm:monotonicity}
		On any interval of $\alpha$ values not including a tip of a V-shape, the alternation points $\{u_i(\alpha)\}_{i=0}^{k}$ and  $\{v_j(\alpha)\}_{j=0}^{\ell}$ are non-decreasing in $\alpha$.
	\end{theorem}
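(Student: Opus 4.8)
The plan is to work on a fixed open interval $I$ of $\alpha$-values containing no tip of a V-shape. Throughout such an $I$ the alternation set $\mathcal{A}_n(\alpha)$ has a constant number of points (namely $n+2$), the same collection of active endpoints, and the kink $x=\alpha$ is a \emph{strict} corner maximum of $g_\alpha$; using Lemma \ref{lem:continuity} together with the implicit function theorem applied at the (nondegenerate) extrema of $g_\alpha$, each alternation point is a differentiable function $z_i=z_i(\alpha)$ on $I$. The two endpoints $\pm 1$, when active, stay fixed and so are trivially non-decreasing, and the corner point $x=\alpha$ moves with speed $1$; it therefore remains only to treat the smooth interior alternation points.

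First I would differentiate the defining relations. An interior alternation point satisfies $g_\alpha(z_i)=\pm 1$ together with $\partial_x g_\alpha(z_i)=0$. Differentiating the first relation in $\alpha$ and invoking the second gives $\phi(z_i)=0$, where $\phi:=\partial_\alpha g_\alpha$; differentiating the critical-point relation gives
\be
z_i'(\alpha)=-\frac{\phi'(z_i)}{\partial_{xx}g_\alpha(z_i)}.
\ee
Since $\partial_{xx}g_\alpha<0$ at a maximum and $>0$ at a minimum, the desired inequality $z_i'\ge 0$ is equivalent to the statement that on each open arc between consecutive extrema of $g_\alpha$ the sign of $\phi$ is opposite to that of $\partial_x g_\alpha$. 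Thus the whole theorem reduces to controlling the sign of $\phi=\partial_\alpha g_\alpha$.

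To do this I would record the structure of $\phi$. Away from $x=\alpha$ it is a polynomial of degree at most $n$ divided by $E_n(\alpha)$ (one polynomial on $(-1,\alpha)$, another on $(\alpha,1)$), it vanishes at all $n+1$ non-corner alternation points (interior ones as critical points, active endpoints as fixed points), and across the corner it has an upward jump of size $2/E_n(\alpha)$. The key computable fact is the value of $\phi$ on the two sides of the corner: using $p_n(\alpha;\alpha)=E_n(\alpha)$, hence $\partial_\alpha p_n(\alpha;\alpha)=E_n'(\alpha)-\partial_x p_n(\alpha;\alpha)$, one finds
\be
\phi(\alpha^+)=\frac{1-\partial_x p_n(\alpha;\alpha)}{E_n(\alpha)},\qquad \phi(\alpha^-)=-\frac{1+\partial_x p_n(\alpha;\alpha)}{E_n(\alpha)}.
\ee
Because $x=\alpha$ is a strict corner maximum away from tips, $|\partial_x p_n(\alpha;\alpha)|<1$, so $\phi(\alpha^+)>0>\phi(\alpha^-)$. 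This anchors the sign of $\phi$ immediately on either side of the corner, exactly opposite to $\partial_x g_\alpha$ there.

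The main obstacle is to propagate this sign outward to every arc, i.e. to show that $\phi$ changes sign at each of the $n+1$ non-corner alternation points and nowhere else. Here I would combine the degree bound on the two polynomial pieces with the $n+1$ already-located zeros and Lemma \ref{MVL}, using the anchor at the corner to fix the overall sign, so as to rule out any additional sign change of $\phi$; this forces $\phi$ to alternate across the arcs in lockstep with, and opposite to, $\partial_x g_\alpha$. Granting this, the displayed formula for $z_i'$ yields $z_i'(\alpha)\ge 0$ at every interior point, and together with the fixed endpoints and the moving corner this proves that all elements of $\mathcal{A}_n(\alpha)$ are non-decreasing on $I$. Excluding the spurious zeros of $\phi$ is the delicate step, since the degree count alone leaves room for an extra zero on one side of the corner; the tip-free hypothesis, which keeps the corner strict and the extrema simple, is what I expect to close this gap.
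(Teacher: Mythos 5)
Your route is genuinely different from the paper's: you differentiate in $\alpha$ and study $\phi=\partial_\alpha g_\alpha$, whereas the paper takes a finite difference $g=g_\alpha-g_\beta$ for nearby $\beta>\alpha$ and counts the zeros of $g''$, which is an honest polynomial of degree at most $n-2$ whose $n-2$ zeros can all be located by exhibiting alternating signs at points interlacing the alternation points; any violation of monotonicity would then force an extra zero. Your reduction of the theorem to a sign statement for $\phi$, and your anchor $\phi(\alpha^+)>0>\phi(\alpha^-)$ at the corner, are both correct. But the decisive step --- showing that $\phi$ changes sign at each non-corner alternation point and nowhere else --- is only announced, and you yourself flag it as the gap. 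The tools you name do not close it: away from the corner, $\phi$ consists of two polynomial branches of degree up to $n$ (note the branches differ by $\partial_\alpha\bigl[2(\alpha-x)/E_n(\alpha)\bigr]$, which is affine in $x$, not by the constant $2/E_n(\alpha)$; that constant is only the jump value at $x=\alpha$), while the forced zeros number only $k+1$ on the left branch and $l+1$ on the right. Since $k+1$ and $l+1$ can each be far below $n$, degree counting leaves room for many spurious sign changes on each side, not merely ``an extra zero on one side of the corner.'' The missing idea is precisely the paper's device of differentiating twice in $x$ so that the piecewise structure collapses to a single polynomial of degree $\le n-2$ with a tight zero count; the analogous repair here would be to run the alternation argument on $\phi''$, which is a single polynomial of degree $\le n-2$.

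There is a second, structural gap: your entire scheme presupposes that $p_n(\cdot;\alpha)$, $E_n(\alpha)$ and the alternation points depend differentiably on $\alpha$. Applying the implicit function theorem ``at the nondegenerate extrema of $g_\alpha$'' does not deliver this, because $g_\alpha$ depends on $\alpha$ through the unknown coefficients $c_j(\alpha)$ and through $E_n(\alpha)$; establishing their smoothness is exactly the content of the paper's Theorem \ref{thm:diff}, proved later via a nontrivial computation showing the $(2n+1)\times(2n+1)$ Jacobian of the equioscillation system is nonsingular. Theorem \ref{thm:diff} does not rely on Theorem \ref{thm:monotonicity}, so your argument is not circular, but as written it silently imports a result at least as hard as the one being proved. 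The paper's finite-difference argument avoids this entirely, needing only the continuity of Lemma \ref{lem:continuity}. Finally, on the linear pieces of a V-shape the endpoints $\pm 1$ are not both active and your ``same collection of active endpoints throughout $I$'' hypothesis fails at phase transitions; the paper disposes of that case separately by the linear change of variable, and you would need a similar gluing step.
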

	\begin{proof}
		For $\alpha$ in a linear part of a V-shape the monotonicity follows from a linear transformation (see \cite[Remark on p. 154]{DLT}). Assume henceforth that $\alpha$ is not in a V-shape, a case described immediately after \eqref{mathcal A} (recall that then $u_{k+1} =-1$ and $v_{\ell+1} = +1$). We shall prove first that the alternation points in question are non-decreasing.
		
		{ It is clear that $g_\alpha'(u_i (\alpha))=0$, $i=1,\dots,k$ and $g_\alpha'(v_j(\alpha))=0$, $j=1,...,\ell$. By Rolle's theorem this accounts for a root of $g_\alpha''(x)$ in each of the $n-3$ subintervals $(u_{i+1}(\alpha),u_i(\alpha))$, $i=1,\dots, k-1$, and  $(v_j(\alpha),v_{j+1} (\alpha))$, $j=1,\dots, \ell-1$. This accounts for $n-3$ zeros of $g''_\alpha (x)$ ($n-2$ zeros if $k$ or $\ell$ is zero). Should $g_\alpha''$ vanish at any of the local extrema $u_i (\alpha)$ or $v_j (\alpha)$, then $g_\alpha'''$ will also vanish there, which yields that $g_\alpha''(x)$ has more than $n-2$ zeros, counting multiplicity, a contradiction. Therefore,  $g''_\alpha(u_1(\alpha))>0$ and $g''_\alpha(v_1(\alpha))>0$ (see \eqref{p=E}).
			Should $g''_\alpha (x)<0$ anywhere on $(u_1 (\alpha),v_1 (\alpha))$, we will obtain two additional zeros of $g_\alpha ''$, which yields a contradiction. Therefore, $g'_\alpha(\alpha^-)>0$ (otherwise, the mean value theorem will imply that $g''_\alpha (\xi)<0$ for some $\xi \in (u_1 (\alpha),\alpha)$).
		}

		Now, let us take $\beta > \alpha$ sufficiently close to $\alpha$ { such that $g'_\beta(\alpha)>0$ and}
		$${\mathcal A}_n(\beta) = \{-1< u_k(\beta) < \ldots < u_1(\beta) < \beta < v_1(\beta) < \ldots < v_\ell (\beta) < 1\}\, .$$
		Consider the function
		\begin{equation}\label{functiong}
			G(x) = G(x;\alpha,\beta) := \,g_{\alpha}(x)\,-\,g_{\beta}(x)\,,x\in \R\,,
		\end{equation}
		with $g_{\alpha}$ and $g_{\beta}$ given by \eqref{g alpha}.  As an illustration, Figure \ref{fig:6case} depicts the case $n=6$ and the graphs of $g_\alpha$, $g_\beta$, and $G$ for $\alpha=0.4$ and $\beta = 0.43$.
		
		\begin{figure}[h]
			\begin{center}
				\includegraphics[scale=0.5]{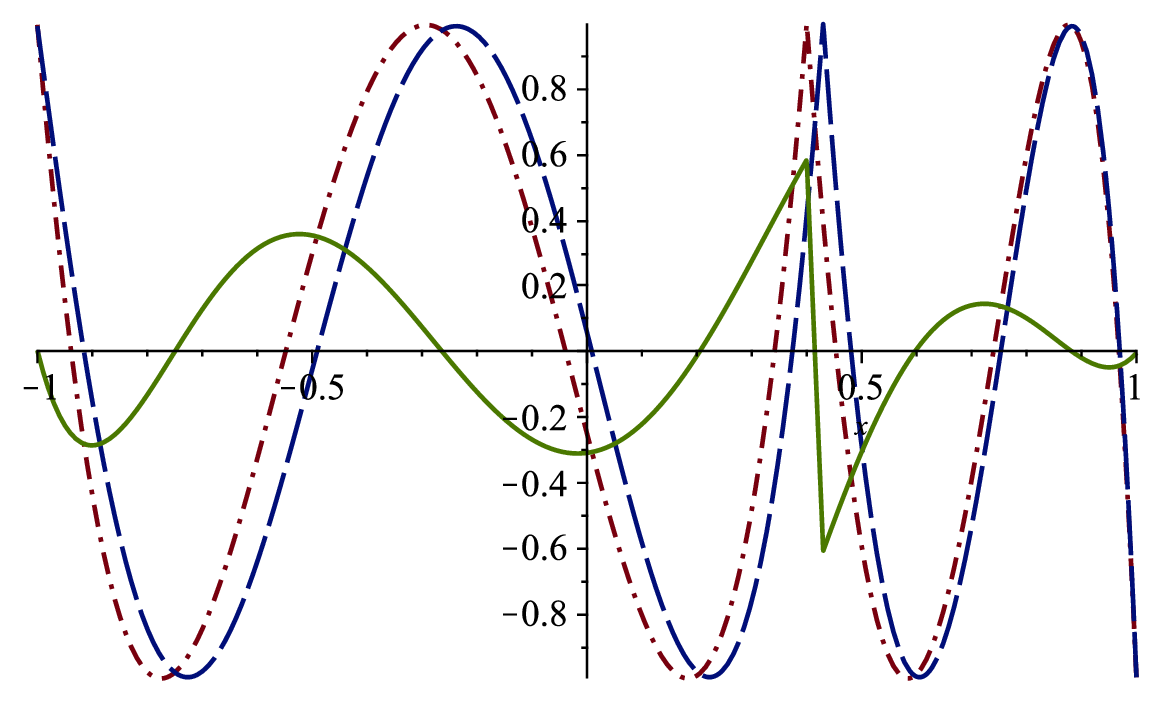}
			\end{center}
			\caption{Graphs of functions $g_\alpha$ (dash-dot), $g_\beta$ (dash) and $G$ (solid) for $n=6, \alpha=0.4$, and $\beta=0.43$. }
			\label{fig:6case}
		\end{figure}

		We note that $G$ is a continuous and piecewise polynomial function differentiable on $\mathbb{R}\setminus \{\alpha, \beta\}$, whose second derivative $G''(x)$ is a polynomial of degree at most $n-2$ with removable singularities at $\alpha$ and $\beta$. By continuity, for $\beta$ close to $\alpha$ we may assume that $u_i(\beta)$ is close to $u_i(\alpha)$ and $v_j(\beta)$ is close to $v_j(\alpha)$, so that $sign\,e_n(u_i(\alpha);\alpha) =  sign\,e_n(u_i(\beta);\beta)\,,i=1,\ldots,k\,,$ as well as $sign\,e_n(v_j (\alpha);\alpha)=sign\,e_n(v_j (\beta);\beta)\,,j=1,\ldots,\ell$, where $e_n(\, \cdot \, ; \, \cdot \,)$ is given by (\ref{equioscil}). Observe that $G(-1) =G(1)= 0$.

		We will account for all the zeroes of $G''(x)$ and show that they are simple and located in $(-1,1)$. Note first that $ G(u_0(\alpha))=G(\alpha)>0$. { Indeed, by \eqref{p=E} and \eqref{g alpha}, we have that $g_\beta (x)$ is a polynomial on the interval $[-1,\beta]$ with $g_\beta (x)\leq 1$. Since we chose $\alpha$ and $\beta$ close enough to have $g'_\beta(\alpha)>0$ and $g_\beta (\alpha)\leq 1$, we guarantee that $g_\beta (\alpha) <1$. }
		
		Since $ G(u_1(\alpha)) \leq 0$, by Lemma \ref{MVL} we have a point $z_0\in (u_1(\alpha),u_0(\alpha))$ at which $G'(z_0)>0$. Similarly, because $ G(u_2(\alpha)) \geq 0$ and $ G(u_1(\alpha)) \leq 0$, there is a point $z_1\in (u_2(\alpha),u_1(\alpha))$ at which $G'(z_1)<0$. { (We remark that $G(x)$ cannot be a constant on $(u_2(\alpha),u_1(\alpha))$, because if it were, it would have to be zero and the equality would hold on $[-1,\alpha]$, but we already established that $G(\alpha)>0$.)} Therefore, there is $y_1 \in (z_1,z_0)\subset (u_2(\alpha),u_0(\alpha))$ such that $G''(y_1)>0$.
		
		Continuing the argument in a similar fashion we derive the existence of points $z_i\in(u_{i+1}(\alpha), u_{i}(\alpha))$, $i=0,\dots,k$, such that $sign\, G'(z_i)=(-1)^{i}$, which yields the existence of points $y_i\in(z_i,z_{i-1})\subset (u_{i+1}(\alpha), u_{i-1}(\alpha))$, $i=1,\dots,k$, such that $sign\, G''(y_i)=(-1)^{i-1}$. Observe that $y_i$ and $y_{i+2}$ belong to distinct intervals and $G''(y_i)$ and $G''(y_{i+1})$ have opposite signs, hence all of the $y_i$'s are distinct. Therefore, by the Intermediate Value Theorem there are at least $k-1$ zeros of $G''(x)$ in the interval $(y_k,y_1)$.
		We note that if $\ell=0$, we have accounted for all $(n-2)$ zeros of $G''(x)$.
		
		Analogously, we derive the existence of $\zeta_j \in (v_{j}(\beta), v_{j+1}(\beta))$, $j=0,\dots,\ell$, such that $sign\, G'(\zeta_j)=(-1)^{j}$ and points $\eta_j \in (v_{j-1}(\beta), v_{j+1}(\beta))$, $j=1,\dots,\ell$, such that $sign\, G''(\eta_j)=(-1)^{j}$. This adds another $\ell-1$ zeros of $G''(x)$ in the interval $(\eta_1,\eta_\ell)$. Finally, since $G''(y_1)>0>G''(\eta_1)$ we account for one more zero of $G''(x)$ in $(y_1,\eta_1)$. In summary, all the zeroes of $G''(x)$ are accounted for and belong to the interval $(y_k,\eta_\ell)\subset (-1,1)$. 
		
		{ To prove that the alternation points are non-decreasing it suffices by symmetry to consider only the $u_i$, $i=1,\dots,k$. Suppose to the contrary that there is a (smallest) index $i$, such that $u_i (\beta) < u_i (\alpha)$. Then $sign \, G(u_i(\alpha)) =(-1)^i$ and $ sign \,  G(u_i(\beta))=(-1)^{i+1}$. This implies there is a point $\widehat{z_{i}} \in (u_i(\beta), u_i(\alpha))$ such that $sign \, G'(\widehat{z_{i}})=(-1)^i$. Next, we modify the construction above to derive the existence of $\widehat{z_j} \in (u_{j}(\beta),u_{j-1}(\beta))$, $j=i+1,\dots,k+1$, such that $sign \,G' (\widehat{z_j})=(-1)^j $. Utilizing the alternating sign changes of $G'(x)$ on the set $\{\widehat{z_{k+1}},\dots,\widehat{z_i},z_{i-1},\dots,z_0 \}$ we obtain at least $n-1$ zeros of $G''(x)$, which is a contradiction.}
		
	\end{proof}

	\begin{remark}
		\label{nosignchange}
		We reiterate for future use the fact arising from the proof that { $G'(x)=G'(x;\alpha,\beta)$ cannot change sign on either of the intervals $(-\infty, z_k)\, , \, (\zeta_\ell,\infty)$, where $z_k=z_k(\alpha,\beta)$ and $\zeta_\ell = \zeta_\ell (\alpha,\beta)$ are as in the proof above.}
	\end{remark}

	Even more, we can derive existence and monotonicity of an additional extremum of the function $g_{\alpha}$ which lies outside the interval $[-1,1]$.
	\begin{proposition}\label{prop:outsideguy}
		Suppose that $\{\pm 1\} \subset \mathcal{A}_n({\alpha})$ and $\alpha$ is neither the tip of a V-shape for $E_{n-1}(\alpha)$ nor part of a V-shape of $E_{n}(\alpha)$.
		Then, there exists { a unique critical point} $w = w(\alpha) \in \R \setminus [-1,1]$ such that $g_{\alpha}'(w)=0$. Moreover, in the intervals of values of $\alpha$ where such $w$ exists, it is also monotonically increasing with respect to $\alpha$.
	\end{proposition}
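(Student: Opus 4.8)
The plan is to re-use the machinery built in the proof of Theorem~\ref{thm:monotonicity}, now paying attention to what happens \emph{outside} $[-1,1]$. Consider again the difference function $g(x) = g_\alpha(x) - g_\beta(x)$ for $\beta > \alpha$ sufficiently close to $\alpha$. The hypothesis that $\{\pm 1\} \subset \mathcal{A}_n(\alpha)$ and that $\alpha$ is not in a V-shape means $\mathcal{A}_n(\alpha)$ consists of exactly $n+2$ points with both endpoints $\pm 1$ active, so in the notation of the previous proof we have $u_{k+1}(\alpha) = -1$ and $v_{l+1}(\alpha) = 1$ both belonging to the alternation set. The point of the extra hypothesis that $\alpha$ is not a tip of a V-shape for $E_{n-1}(\alpha)$ is to guarantee that $g''$ genuinely has exact degree $n-2$ and that all $n-2$ of its zeros are the simple interior ones already located; this rules out the degenerate coincidence $p_n(\cdot;\alpha)=p_{n-1}(\cdot;\alpha)$ that would change the degree count.

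First I would establish the existence of $w$. Since $g''$ is a polynomial of exact degree $n-2$ whose $n-2$ zeros were all shown to lie in the open interval $(y_k,\eta_l)\subset(-1,1)$, the derivative $g'$ is a polynomial of exact degree $n-1$. Counting the sign-alternating critical behavior of $g'$ established previously, $g'$ has $z_k,\dots,z_0,\zeta_0,\dots,\zeta_l$ as a full complement of interior sign-change locations, accounting for $n-1$ sign changes, hence all $n-1$ real zeros of $g'$ lie inside $(-1,1)$. Now examine the behavior of $g'_\alpha$ and $g'_\beta$ separately near the endpoint. Because both $1\in\mathcal{A}_n(\alpha)$ and $1\in\mathcal{A}_n(\beta)$, the functions $g_\alpha$ and $g_\beta$ each attain the extreme value $\pm1$ at $x=1$, so $g(1)=0$ and, more importantly, the one-sided derivatives are controlled. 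The key observation is that outside $[-1,1]$ the functions $g_\alpha,g_\beta$ are honest polynomials (the absolute-value kink at $\alpha$ is interior), and by Remark~\ref{nosignchange} $g'$ does not change sign on $(\zeta_l,\infty)$. I would argue that $g'_\alpha$ has a zero $w>1$ by comparing $g_\alpha$ at $x=1$, where it equals an extreme value, with its behavior as $x\to+\infty$: since $g_\alpha$ is (up to the normalization) $p_n(x;\alpha)-(x-\alpha)$ divided by $E_n(\alpha)$ for $x>\alpha$, its leading term forces $g_\alpha(x)\to\pm\infty$, and matching this against the sign of the extreme value at $x=1$ produces, via Lemma~\ref{MVL}, a critical point $w$ of $g_\alpha$ in $(1,\infty)$ (the case $w<-1$ is handled symmetrically depending on the parity and sign pattern).

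For the monotonicity of $w=w(\alpha)$ I would run the same difference-function comparison one more interval further out. Having placed all $n-1$ zeros of $g'$ inside $(-1,1)$, I know $g'$ has constant sign on $(1,\infty)$. Then I compare the positions $w(\alpha)$ and $w(\beta)$: if monotonicity failed, i.e. $w(\beta) < w(\alpha)$ (with both exceeding $1$), then $g'_\alpha(w(\alpha))=0=g'_\beta(w(\beta))$ together with the fixed sign of $g'$ on $(1,\infty)$ would force $g'$ to change sign on $(\zeta_l,\infty)$, contradicting Remark~\ref{nosignchange}. More carefully, since $g'_\alpha(w(\alpha))=0$ and $g'_\beta(w(\beta))=0$ with $w(\beta)<w(\alpha)$, evaluating $g'=g'_\alpha-g'_\beta$ at the two points $w(\beta)$ and $w(\alpha)$ and using the monotone sign of each $g'_\alpha,g'_\beta$ beyond their respective last critical points yields $g'(w(\beta))$ and $g'(w(\alpha))$ of opposite sign, hence a sign change of $g'$ on $(\zeta_l,\infty)$, the desired contradiction.

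The main obstacle I anticipate is the bookkeeping of signs: one must track the parity $\epsilon(-1)^i$ in the equioscillation \eqref{equioscil} carefully to determine whether the exterior extremum $w$ lies to the right of $+1$ or to the left of $-1$, and to pin down the sign of $g'$ on the exterior ray so that the comparison argument produces a genuine contradiction rather than a consistent configuration. This is a careful-case-analysis difficulty rather than a conceptual one: the entire argument is a recount of zeros of $g''$ and sign changes of $g'$, identical in spirit to Theorem~\ref{thm:monotonicity}, but extended by one critical point into the exterior region, where the precise sign of the leading coefficient of $p_n(\cdot;\alpha)$ relative to the extreme value at the endpoint determines everything.
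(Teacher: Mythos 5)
Your overall strategy is the paper's: count the interior critical structure of $g_\alpha$, use its behavior at $\pm\infty$ for existence of $w$, and use the difference function $g=g_\alpha-g_\beta$ together with Remark~\ref{nosignchange} for monotonicity. Two steps need repair, though. For existence, comparing $g_\alpha(1)$ with the limit at $+\infty$ only forces a critical point in $(1,\infty)$ when the sign of the leading coefficient of $p_n(\cdot;\alpha)$ is opposite to $g_\alpha(1)$; in the other case nothing is forced on the right, and "handled symmetrically" does not by itself rule out the configuration in which neither side forces an extremum. This is exactly what the hypothesis $\{\pm 1\}\subset\mathcal{A}_n(\alpha)$ buys: with $n+2$ alternation points one has $g_\alpha(-1)=(-1)^{n+1}g_\alpha(1)$, while the leading term gives $\lim_{x\to-\infty}g_\alpha=(-1)^n\lim_{x\to+\infty}g_\alpha$, and these two parities together show that exactly one of the two exterior rays must produce the extremum. (The hypothesis that $\alpha$ is not a tip of a V-shape of $E_{n-1}$ enters here too: it guarantees $\deg p_n=n$, so $g_\alpha$ genuinely behaves like a degree-$n$ polynomial at infinity; your reading of that hypothesis as being about the degree of $g''$ is secondary.)

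The more serious problem is in the monotonicity half. Your "more careful" step claims that if $w(\beta)<w(\alpha)$ (both $>1$) then $g'(w(\beta))$ and $g'(w(\alpha))$ have opposite signs. They do not: since $\mathrm{sign}\, g_\alpha'=(-1)^{l+1}$ on $(v_l(\alpha),w(\alpha))$ and $(-1)^{l}$ on $(w(\alpha),\infty)$ (likewise for $\beta$), one computes $g'(w(\beta))=g_\alpha'(w(\beta))$ of sign $(-1)^{l+1}$ and $g'(w(\alpha))=-g_\beta'(w(\alpha))$ also of sign $(-1)^{l+1}$ --- the same sign. So no sign change of $g'$ is exhibited between these two points, and mere \emph{constancy} of sign on the exterior ray yields no contradiction. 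The contradiction only appears once you use what that constant sign actually is: the interior analysis gives $\mathrm{sign}\, g'(\zeta_l)=(-1)^{l}$, which is the opposite of $(-1)^{l+1}$, so $g'(w(\beta))$ already violates Remark~\ref{nosignchange} relative to $\zeta_l$. That is precisely how the paper closes the argument: it evaluates $g'$ at the single point $w(\alpha)$ and compares its sign against the sign inherited from $z_k$ (resp.\ $\zeta_l$). A smaller point: $g'$ is only piecewise polynomial (it has corners at $\alpha$ and $\beta$); only $g''$ is an honest polynomial of degree $n-2$, so "all $n-1$ real zeros of $g'$" is not a well-posed count, and the relevant fact is the no-sign-change statement of Remark~\ref{nosignchange}, not a zero count of $g'$.
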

	\begin{proof} Since $\alpha$ is not a tip of a V-shape for $E_{n-1}(\alpha)$ or $E_{n}(\alpha)$, we know  $p_n(x;\alpha)$ is of degree $n$ and $\mathcal{A}_n(\alpha)  $ has $n+2$ points. 
		We also know that $g_\alpha(x)$ alternates monotonicity on the intervals $(u_{i+1}(\alpha),u_i(\alpha))$, $i=0, \dots, k$, and $(v_{j}(\alpha),v_{j+1}(\alpha))$, $j=0, \dots, \ell$, which implies $g_\alpha'(x)$ changes sign at least $k$ times on $(-1,\alpha)$ and at least $\ell$ times on $(\alpha,1)$ (in particular, $sign\, g'_\alpha(x)=(-1)^k$ on $(-1,u_k(\alpha))$). 
		Consequently { the $(n-2)$-degree polynomial} $g_\alpha ''(x)$ has at least $k-1$ zeros in $(-1,\alpha)$ and at least $\ell-1$ zeros in $(\alpha,1)$. 
		Since $\{ \pm 1\}\subset \mathcal{A}_n({\alpha})$, we have that $ g_{\alpha}(-1) = (-1)^{n+1} g_{\alpha}(+1)$. Thus, there must be { a unique} additional extremum $w=w(\alpha)$ outside $[-1,1]$, because $g_{\alpha}$ behaves as a polynomial of degree $n$ (even or odd) at $\pm \infty$.
		
		\begin{figure}[h]
			\begin{center}
				\includegraphics[scale=0.5]{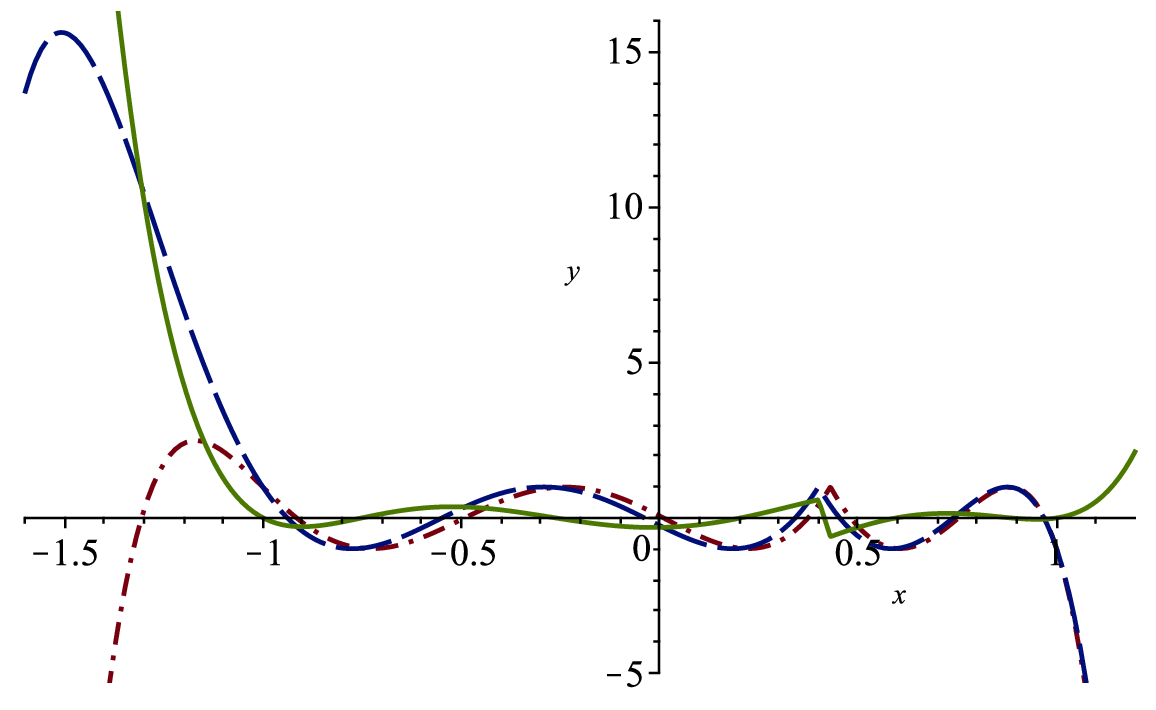}
			\end{center}
			\caption{$\{g_\alpha, g_\beta, G(x)\}$ for $n=6, \alpha=0.4$, and $\beta=0.43$. (Dash, Dash-Dot, and Solid graphs respectively) }
			\label{fig:6'case}
		\end{figure}
		To derive the monotonicity of $w(\alpha)$, let $\beta>\alpha$ be as in the proof of Theorem \ref{thm:monotonicity} with the additional assumption that $w(\alpha)$ and $w(\beta)$ are close to each other and outside $[-1,1]$. As we noted in Remark \ref{nosignchange}, $G'(x)$ preserves sign on $(-\infty,z_k)$ and on $(\zeta_\ell,\infty)$. This implies that $G(x)$ preserves monotonicity on these subintervals.
		
		We first consider the case  $w(\alpha), w(\beta) \in (-\infty,-1)$ (see Figure \ref{fig:6'case} for illustration). Observe that $g_\alpha(x)$ and $g_\beta (x)$ share alternating monotonicity on the intervals $(u_{i+1}(\beta),u_i(\alpha))$, $i=0, \dots, k$.  Recall also that $sign\, G'(z_i)=(-1)^i$, $i=0,\dots,k$, $z_i\in (u_{i+1}(\alpha),u_i (\alpha))$.  As $\alpha$ is not part of a V-shape for $E_n (\alpha)$, we must have $g_\alpha '(-1)\not= 0$. We may assume $\beta>\alpha$ is close enough that $g_\beta '(-1)\not= 0$ as well. Then the functions $G, g_\alpha$, and $g_\beta$  have the same monotonicity in  neighborhoods of $-1$ (indeed, $sign\, G'(z_k)=sign\, g'_\alpha (z_k)=(-1)^k$).
		
		Without loss of generality we assume that the function $G(x)$ is decreasing on $(-\infty,z_k)$, or $G'(x)<0$ in that interval. We have that $g_\alpha (x)$ is decreasing on $(w(\alpha),-1)$ and increasing on $(-\infty,w(\alpha))$. Similarly, we have that $g_\beta(x)$ is decreasing on $(w(\beta),-1)$ and increasing on $(-\infty,w(\beta))$. If we assume that $w(\beta)<w(\alpha)$, as $g_\alpha '(w(\alpha))=0$ one derives $g_\alpha ' (w(\alpha)) - g_\beta ' (w(\alpha))>0$, a contradiction with $G(x)$ being decreasing on $(-\infty,-1)$.
		
		The case  $w(\alpha), w(\beta) \in (1,\infty)$ follows by considering the approximation of the checkmark functions  $f(x;-\alpha)$ and $f(x;-\beta)$ and the symmetry of the minimax problem (see \cite{DLT}).
		
		This proves the proposition.
	\end{proof}
	
	\setcounter{equation}{0}
	\setcounter{theorem}{0}
	\section{Analyticity of $E_n(\alpha)$ and Shekhtman's Conjecture.}
	{
		
		Throughout this section we will show that the minimax error $E_n(\alpha)$ is an analytic function of the parameter $\alpha \in (-1,1)$, except at tips and endpoints of V-shapes (where analyticity actually fails). Furthermore, we will see that $E_n(\alpha)$ has a continuous derivative on any interval excluding tips of V-shapes. The same conclusions will hold for the coefficients of the minimax polynomial $p_n(x;\alpha)$, as well as the alternation points $\{u_i\}$ and $\{v_j\}$. At the end of the section we prove Shekhtman's Conjecture that for odd $n$, $E_n(\alpha)$ has a local maximum at $\alpha=0$.
		
		For values of $\alpha$ in an interval which does not intersect with a V-shape, we are in the situation described just after (\ref{mathcal A}), and we label the alternation points $ -1=u_{k+1}<u_{k}< u_{k-1}<\cdots< u_1< \alpha< v_1< v_2< \cdots< v_\ell<v_{\ell+1}=1$. It is understood that the $u_i$ and $v_j$ depend on $\alpha$, and that $k+\ell=n-1$. Note that for each fixed $\alpha$, $g_\alpha(x)$ is smooth away from $x=\alpha$ and has local extrema at each $x=u_i$ and $x=v_j$. Let us say $p_n(x;\alpha)= c_nx^n + \cdots + c_1 x + c_0$, where the coefficients $c_j$ depend on $\alpha.$
		
		In this case we can phrase the solution to the minimax problem in terms of a system of $2n+1$ equations in $2n+1$ unknowns, with $\alpha$ as an independent parameter. The first $n+2$ equations will describe the equioscillation at the alternation points, and the final $n-1$ equations will state that $g_\alpha'$ vanishes at the $u_i$ and $v_j$. The unknowns are the $c_m$, the $u_i$, the $v_j$ and $E_n$.
		
		{
			\begin{alignat*}{9}\label{impsys}
				\sum_{m=0}^n c_m u_i^m & {}-\alpha & {}+ u_i & {}+ (-1)^{i+1} E_n   & =0 ,& \ \ \ i=k+1,k,\dots, 0; \\
				\sum_{m=0}^n c_m v_j^n & {}+\alpha & {}- v_j & {}+ (-1)^{j+1}E_n   & =0 ,& \ \ \  j=1,\dots,\ell +1;\\
				\sum_{m=1}^n m c_m u_i^{m-1} &   {}+0       &  {}+1   &    {}+0            &=0 ,& \ \ \   i=k,k-1,\dots ,1;\\
				\sum_{m=1}^n m c_m v_j^{m-1} &   {}+0       &  {}-1   &      {}+0              &=0 ,& \ \ \ j=1,\dots, \ell.
			\end{alignat*}
		}

		With an eye toward the analytic implicit function theorem {(see \cite[Theorem 2.3.5]{KP})}, differentiate each of the above equations with respect to each of the unknowns. Gather these partial derivatives in a matrix, where the first $n+1$ columns contain the derivatives with respect to the $c_n, c_{n-1},\cdots, c_0$; column $n+2$ contains the derivatives with respect to $E_n$; and the last $n-1$ columns contain derivatives with respect to the \linebreak $u_k, u_{k-1}, \cdots, u_1, v_1, v_2, \cdots, v_\ell$.  
		
		To be precise, let $\textbf{F} = (F_1,\ldots,F_{2n+1})$ be the vector function $\textbf{F}: \R^{2n+2}\rightarrow \R^{2n+1}$ and $\textbf{y} = (\textbf{x},\alpha) = \left(\{c_{n-m}\}_{m=0}^n, E_n, \{u_{k-i}\}_{i=0}^{k-1}, \{v_j\}_{j=1}^\ell, \alpha \right)  \in \R^{2n+2}$ in such a way that the above system has the vector form
		\begin{equation}\label{sysvector}
			\textbf{F}(\textbf{y}) = \textbf{0}\,,\; \textbf{0}\in \R^{2n+1}\,.
		\end{equation}
		To prove that \eqref{sysvector} implies the existence of an implicit function
		$$\textbf{G}: \R \rightarrow \R^{2n+1}\,,\; \textbf{G}(\alpha) = \textbf{x}\,,$$
		we require that the Jacobian matrix
		\begin{equation}\label{Jacobian}
			\mathcal{J} =\mathcal{J} (\alpha):= \left[\frac{\partial \textbf{F}}{\partial \textbf{x}}\right]
		\end{equation}
		be nonsingular. In that case, we would have
		\begin{equation}\label{derivatives}
			\left[\frac{d \textbf{G}}{d \alpha}\right]_{(2n+1)\times 1} = -\,\mathcal{J}_{(2n+1)\times (2n+1)}^{-1}\,\cdot \,\left[\frac{\partial \textbf{F}}{\partial \alpha}\right]_{(2n+1)\times 1}
		\end{equation}
		The Jacobian matrix $\mathcal{J}$ given in \eqref{Jacobian} may be written in block form:
		\[ \mathcal{J} = \begin{bmatrix}
			A_{(n+2)\times (n+2)} & B_{(n+2)\times(n-1)} \\ C_{(n-1)\times(n+2)} & D_{(n-1) \times (n-1)} \end{bmatrix} \]
		
		First note that $B$ is the null matrix. Indeed, its first and last rows, and all of its off-diagonal entries are $0$ by inspection, and the remaining diagonal entries are $1+p_\alpha'(u_i), i=k,k-1, \dots 1$; and $-1+p_\alpha'(v_j),j=1,2,\dots, \ell$, respectively. Each of these is $0$ according to the final $n-1$ equations in the system.
		
		The matrix $D$ is diagonal by inspection, and its diagonal entries are $p_{\alpha}''(u_i), i=k, k-1, \dots, 1$; and $p_\alpha ''(v_j), j=1,2, \dots, \ell$, respectively. 
		At this point, we conclude that
		\begin{equation}\label{detJ}
			\det \, \mathcal{J}= \det \, A \, \cdot \, \prod_{i=1}^k p''_n(u_i;\alpha) \, \cdot \, \prod_{j=1}^\ell p''_n(v_j;\alpha). \end{equation}	
		Since { $g_\alpha' (x)$ vanishes at $\{w(\alpha), u_k, \cdots, u_1\}$ and $\{v_1, \cdots, v_\ell \}$, we can identify by Rolle's Theorem all $n-2$ roots of $g_\alpha'' (x)=p_n''(x;\alpha)/E_n (\alpha)$ strictly between these points (if $w(\alpha) >1$ then include it after $v_\ell$ instead of before $u_k$)}. Hence, the product of second derivatives in (\ref{detJ}) is nonzero.

		We also see that $\det \, A \neq 0$, where	
		\[A= \begin{bmatrix} (-1)^n		 & (-1)^{n-1}		& 	\cdots 	& -1   	& 1        & (-1)^{k+2} \\
			u_k^n 	& u_k^{n-1}	& \cdots	& u_k 	& 1        & (-1)^{k+1} \\
			\vdots	& \vdots	& \ddots 	& \vdots& \vdots   & \vdots \\
			u_1^n 	& u_1^{n-1}	& \cdots	& u_1 	& 1        & (-1)^{2} \\
			\alpha^n 	& \alpha^{n-1}	& \cdots	& \alpha 	& 1        & (-1)^{1}\\
			v_1^n 	& v_1^{n-1}	& \cdots	& v_1 	& 1        & (-1)^{2} \\
			\vdots	& \vdots	& \ddots 	& \vdots& \vdots   & \vdots \\
			v_\ell^n 	& v_\ell^{n-1}	& \cdots	& v_\ell 	& 1        & (-1)^{\ell+1}\\
			1		 & 1		& 	\cdots 	& 1   	& 1        & (-1)^{\ell+2} \end{bmatrix}.
		\]
		If we let $\hat A$ be the matrix obtained from $A$ by reversing the order of its columns, then the minors of $\hat{A}$ formed along the first column are Vandermonde matrices. We derive
		{ \[\det \, A = (-1)^k\,\cdot (-1)^{(n+2)(n+1)/2} \, \cdot \left(\sum_{i=1}^{k+1}\,{\mathcal V}\, (u_i)\,+\, {\mathcal V}\, (\alpha)\,+\,\sum_{j=1}^{\ell+1}\, {\mathcal V}\, (v_j)\right)\, ,\]}where ${\mathcal V}\, (u_i)\,,\,i=1,\ldots,k+1\,,$ ${\mathcal V}\, (\alpha)$ and ${\mathcal V}\, (v_j)\,,\,j=1,\ldots,\ell+1\,,$ denote the Vandermonde determinants obtained as minors by deleting the first column and the row corresponding to $u_i$, $\alpha$ or $v_j$ respectively in $\hat{A}.$ Since each of these $\mathcal{V}(\cdot)$ is strictly positive, $\det \, A$ is non-zero. 
		
		We thus have the following theorem.
		
		\begin{theorem}\label{thm:diff}
			The error function $E_n(\alpha)$ is analytic in $\alpha$ on $(-1,1)$, with the exception of the endpoints and the tips of V-shapes. The same holds for the alternation points and the coefficients of the minimax polynomial.
		\end{theorem}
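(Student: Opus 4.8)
The plan is to realize the minimax data as the solution of a square analytic system and apply the analytic Implicit Function Theorem. Fix $\alpha_0\in(-1,1)$ that is neither an endpoint nor a tip of a V-shape. By \cite{DLT}, $\mathcal A_n(\alpha_0)$ has exactly $n+2$ points, with $\{-1,\alpha_0,1\}\subset\mathcal A_n(\alpha_0)$, and the remaining $n-1$ interior points $u_1,\dots,u_k,v_1,\dots,v_l$ ($k+l=n-1$) are smooth local extrema of the error $e_n(x;\alpha)=|x-\alpha|-p_n(x;\alpha)$ lying strictly on their own side of $\alpha$. I collect the unknowns as
$$
\mathbf y=(a_0,\dots,a_n,\,E,\,u_1,\dots,u_k,\,v_1,\dots,v_l)\in\R^{2n+1},
$$
where $p_n(x;\alpha)=\sum_{j=0}^n a_j x^j$, and write the defining relations: at each interior alternation point a value equation $e_n=\pm E$ together with an extremum equation $\partial_x e_n=0$ (that is $2(n-1)$ equations), and at each of the three special points $-1,\alpha,1$ the single value equation $e_n=\pm E$ (three equations). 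This yields a system $F(\mathbf y;\alpha)=0$ of $2n+1$ equations in $2n+1$ unknowns. Away from $\alpha$ the function $|x-\alpha|$ is jointly real analytic in $(x,\alpha)$, and since the interior nodes stay strictly to one side of $\alpha$, the map $F$ is real analytic in $(\mathbf y,\alpha)$ near the known solution $(\mathbf y(\alpha_0);\alpha_0)$.

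The crux is to show that the Jacobian $D_{\mathbf y}F$ is nonsingular at this solution; then the analytic IFT furnishes a locally unique analytic branch $\alpha\mapsto\mathbf y(\alpha)$, and by uniqueness of the best approximation together with the continuity of Lemma \ref{lem:continuity} this branch coincides with the true minimax data. To analyze the kernel, a kernel vector $(\dot a_0,\dots,\dot a_n,\dot E,\dot u_i,\dot v_j)$ determines a polynomial $q(x)=\sum_j\dot a_j x^j\in\mathbb{P}_n$. Differentiating the value equations and using $\partial_x e_n=0$ at the interior nodes (and that $-1,\alpha,1$ do not move) shows that $q$ takes the alternating values $\mp\dot E$ at all $n+2$ alternation points. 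If $\dot E\neq 0$, then $q$ changes sign across each of the $n+1$ gaps, so $q$ has at least $n+1$ zeros, forcing $q\equiv 0$ and hence $\dot E=q(\alpha)=0$, a contradiction; thus $\dot E=0$. But then $q$ vanishes at all $n+2$ alternation points, whence $q\equiv 0$ and every $\dot a_j=0$.

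It remains to force the node variations to vanish. Differentiating the extremum equation $\partial_x e_n(z_m;\alpha)=0$ in the kernel direction gives $\partial_{xx}e_n(z_m)\,\dot z_m=q'(z_m)=0$, so $\dot z_m=0$ provided $\partial_{xx}e_n(z_m)=-p_n''(z_m)\neq 0$, i.e. provided each interior alternation point is a \emph{nondegenerate} extremum. This nondegeneracy is the main obstacle, and it is precisely what can fail at a tip of a V-shape: there the alternation count jumps to $n+3$ and an interior extremum becomes degenerate, which is exactly why tips must be excluded. Away from the tips I expect nondegeneracy to follow from the exact $n+2$-point alternation of \cite{DLT}, supported by a Rolle count of the zeros of $e_n''=-p_n''$ between consecutive nodes against the bound $\deg p_n''\le n-2$; the second-derivative bookkeeping already developed in the proof of Theorem \ref{thm:monotonicity} (simplicity and exact count of zeros of the relevant $g''$) should supply the missing control. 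Granting nondegeneracy, the kernel is trivial, $D_{\mathbf y}F$ is invertible, and the IFT applies.

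Finally, real analyticity of $E_n(\alpha)$, of the alternation points, and of the coefficients of $p_n(x;\alpha)$ follows at once, since each is a coordinate of the analytic solution $\mathbf y(\alpha)$. The only points of $(-1,1)$ where the construction degenerates are the tips of V-shapes (where nondegeneracy and the $n+2$ count both fail) and the endpoints $\pm 1$; hence $E_n(\alpha)$ is real analytic on $(-1,1)$ away from these finitely many tips and the endpoints, as asserted.
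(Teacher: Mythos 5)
Your architecture is exactly the paper's: the same square system of $2n+1$ analytic equations (value equations at the $n+2$ alternation points, derivative equations at the $n-1$ interior ones) in the coefficients, the error, and the interior nodes, followed by the Implicit Function Theorem, with the tips of V-shapes excluded precisely because the alternation count and the nondegeneracy break there. Where you genuinely diverge is in proving the Jacobian nonsingular. The paper writes $\mathcal{J}$ in block form, observes that the block of derivatives of the value equations with respect to the nodes vanishes (by the extremum equations) while the block $D$ is diagonal with entries $p_n''(u_j;\alpha)$, $p_n''(v_i;\alpha)$, so that $\det\mathcal{J}=\det A\cdot\prod p_n''$, and then shows $\det A\neq 0$ by permuting columns and expanding into a sum of strictly positive Vandermonde determinants. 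Your kernel argument reaches the same conclusion more softly: the polynomial $q\in\mathbb{P}_n$ built from a kernel vector takes alternating values $\mp\dot E$ at the $n+2$ alternation points (the node variations drop out by the extremum equations --- this is the same observation that kills the block $B$ in the paper), and the classical sign-change count forces $\dot E=0$ and $q\equiv 0$. That is precisely the statement $\det A\neq 0$, obtained with no determinant computation; it is arguably cleaner, and it buys a transparent reason why the value equations alone pin down the coefficients and the error. Both routes then require the identical nondegeneracy input $p_n''\neq 0$ at the interior alternation points, to kill the node variations in your version and to make $\det D\neq 0$ in the paper's.

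That nondegeneracy is the one place your write-up stops short (``I expect \dots should supply the missing control''). The paper does not skip it: it closes the point in one sentence by noting that the local extrema of $p_n(x;\alpha)-|x-\alpha|$ at $w(\alpha),u_k,\dots,u_1,v_1,\dots,v_l$ alternate strictly between maxima and minima, so that a Rolle count against $\deg p_n''\le n-2$ places every zero of $p_n''$ strictly between consecutive extrema and hence away from the alternation points. Your instinct about how to close the step is therefore correct, but you must actually run the count, and in doing so you need the external extremum $w(\alpha)$ of Proposition \ref{prop:outsideguy} --- the $n-1$ interior nodes alone give only $n-3$ forced sign changes of $p_n''$ and leave room for a degenerate extremum; it is the full list of $n$ alternating extrema, including $w(\alpha)$, that saturates the degree bound and forbids a double zero of $p_n''$ at any $u_j$ or $v_i$. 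With that paragraph written out (the bookkeeping is the same as in the proofs of Theorem \ref{thm:monotonicity} and Proposition \ref{prop:outsideguy}), your proof is complete.
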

		\begin{proof}
			{Suppose $n\geq 2$ (direct computation shows $E_1(\alpha)$ is quadratic). Consider $\alpha$ in an open interval excluding the tips and endpoints of the V-shapes of $E_n(\alpha)$. If the interval is within a V-shape, analyticity follows from the fact that the minimax problem is solved by a linear transformation.} 
			
			For intervals outside of any V-shape, the analytic implicit function theorem can be applied as described just before the statement of the theorem.
		\end{proof}

		We will see later that as $\alpha$ passes into or out of a V-shape, an alternation point may enter the interval $[-1,1]$ from the left, or an alternation point may exit the interval to the right. The details will be explained in Section 4, with the conclusion that at a left endpoint of a V-shape $w(\alpha)=u_{k+1}(\alpha)=-1$ and as $\alpha$ increases $u_{k+1}(\alpha) $ will move into $(-1,1)$ as the leftmost alternation point. At the right endpoint of a V-shape, $v_\ell$ will reach $x=1$, and then $v_{\ell-1}$ will become the rightmost interior alternation point. (Consequently, $v_\ell$ will thereafter fail to exist according to our numbering convention.)
		
		With this in mind, we can elaborate on the previous theorem. Namely, on any interval excluding tips of V-shapes, we will conclude $\mathcal{C}^1$ smoothness for $E_n(\alpha)$, the $c_m$, and any of the $u_i$ and $v_j$ which exist on the whole interval.
		
		Given the theorem above, we need only show that the derivatives of these quantities are continuous at the endpoints of V-shapes. Let $\gamma$ be the left endpoint of a V-shape (the case of a right endpoint can be treated similarly.) For $\alpha<\gamma$ and close to $\gamma$, let the minimax problem be described exactly as laid out above in \eqref{sysvector}-\eqref{detJ}.  
		
		For $\alpha > \gamma$, $x=-1$ is no longer an alternation point, and instead an extra interior alternation point $u_{k+1}<u_k$ appears.  We can still describe the minimax problem as a system of equations, with the following modifications. In the system above we used $u_{k+1} := -1$; but now allow $u_{k+1}$ to be another unknown in its own right. We also append an extra equation to the system: $g_{\alpha}'(u_{k+1})=0$. Let this modified system be denoted $\widetilde{\bf{F}}(\widetilde{\bf{y}},\alpha): \mathbb{R}^{2n+3}\to\mathbb{R}^{2n+2}.$  Correspondingly, we let the Jacobian of the new system be formed by placing derivatives with respect to $u_{k+1}$ into column $2n+2$, and call the matrix $\widetilde{\mathcal{J}}(\alpha)$, namely
		\begin{equation}\label{Jtilde}
			\widetilde{\mathcal{J}}(\alpha):=
			\begin{bmatrix}
				\mathcal{J}(\alpha)&\mathbf{0}\\
				\mathbf{r}(u_{k+1})&p_n''(u_{k+1})
			\end{bmatrix},
		\end{equation}
		where $\mathbf{r}(u_{k+1})=[nu_{k+1}^{n-1},\dots,1,0,\dots,0]$ is a $2n+1$-dimensional row vector and $\mathbf{0}$ is the $(2n+1)$-dimensional column zero vector.  Let $\widetilde{A}(\alpha)$ be the same as matrix $A(\alpha)$, except with $u_{k+1}$ replacing $-1$ in the relevant entries of the first row. From Section 4, $\widetilde{A}(\gamma)={A}(\gamma)$, since at that point $u_{k+1}=-1$.  
		
		In \eqref{derivatives}, note that
		$$\left[\frac{\partial \textbf{F}}{\partial \alpha}\right]^T\,=\,\begin{bmatrix} -1 & \cdots & -1 & p_n'(\alpha;\alpha) & 1 & \cdots & 1 &0 & \cdots & 0 \end{bmatrix},$$
		where $-1$, $1$ and $0$ are repeated $k+1$, $\ell+1$ and $n-1$ times, respectively. The matrix
		$\left[\frac{\partial\widetilde{ \textbf{F}}}{\partial \alpha}\right]^T$
		is of exactly the same form, except with one extra $0$ entry on the right.
		
		For notational convenience, let
		
		\[\bf{F}'(\gamma):=\lim_{\alpha \to \gamma^-} \left[\frac{\partial \textbf{F}}{\partial \alpha}\right]= \begin{bmatrix} -1 & \cdots & -1 &p_n'(\gamma; \gamma) & 1 \cdots & 1 &0 & \cdots & 0 \end{bmatrix} \]
		and similarly let
		\[ \widetilde{\bf{F}}'(\gamma):=\lim_{\alpha \to \gamma^+} \left[\frac{\partial \widetilde{\textbf{F}}}{\partial \alpha}\right]= \begin{bmatrix} -1 & \cdots & -1 &p_n'(\gamma; \gamma) & 1 \cdots & 1 &0 & \cdots & 0 \end{bmatrix}, \]
		where the vector $\widetilde{\bf{F}}'(\gamma)$  has an extra 0 entry on the right in comparison to $\bf{F}'(\gamma)$. In Section $4$ we will see that as $\alpha \to \gamma$, the external extremum $w(\alpha)$ of $g_\alpha(x)$ reaches $x=-1$ to coincide with (and in fact become) $u_{k+1}$.  We have seen already that the roots of $p_n''(x; \alpha)$ are strictly between the critical points of $g_\alpha(x)$, and so $p_n'' (u_{k+1};\alpha)\not= 0$. From \eqref{Jtilde} we have $$\det \widetilde{\mathcal{J}}(\alpha)=p_n'' (u_{k+1};\alpha) \det \mathcal{J}(\alpha)\not= 0,$$ so $  \widetilde{\mathcal{J}}(\alpha)$ is invertible as well. Direct inspection reveals that
		\[
		\widetilde{\mathcal{J}}(\alpha)^{-1}=
		\begin{bmatrix}
			\mathcal{J}(\alpha)^{-1}&\mathbf{0}\\
			-p_n''(u_{k+1})^{-1}\mathbf{r}(u_{k+1})\mathcal{J}(\alpha)^{-1}&p_n''(u_{k+1})^{-1}
		\end{bmatrix},
		\]
		Moreover, $\mathcal{J}(\alpha)$ and $\widetilde{\mathcal{J}}(\alpha)$ remain invertible even for $\alpha=\gamma$. 
		
		At this point, we have
		
		\[ \bf{G}'(\gamma^-) := \lim_{\alpha \to \gamma^-} \bf{G}'(\alpha)= - \mathcal{J}^{-1}(\gamma) \cdot \bf{F}'(\gamma),  \]
		
		\[\widetilde{\bf{G}} '(\gamma^+):= \lim_{\alpha \to \gamma^+} \widetilde{\bf{G}}'(\alpha)=- \widetilde{\mathcal{J}}^{-1}(\gamma) \cdot \widetilde{\bf{F}}'(\gamma) .\]
		Since $\lim_{\alpha \to \gamma^-}A(\alpha)=\lim_{\alpha \to \gamma^+} \widetilde{A}(\alpha)$, the entries of $\bf{G}'(\gamma^-)$ are identical to the first $2n+1$ entries of $\widetilde{\bf{G}}'(\gamma)$. 
		
		Hence $E_n(\alpha)$, the $c_m$, the $u_i$ and the $v_j$ for $i=1,2,\dots, k; j=1,2, \dots,\ell$ have continuous derivative in $\alpha$, even at the point $\alpha=\gamma$. We state this as a theorem

		\begin{theorem} On any interval of $\alpha$ values excluding tips of V-shapes, $E_n(\alpha)$ has a continuous derivative with respect to $\alpha$, as do the minimax coefficients $\{c_m\}$ and the alternation points $\{u_i\}, \{v_j\}$ whenever they exist. \end{theorem}
		
		As an immediate corollary, we can strengthen Theorem \ref{thm:monotonicity} to conclude strict monotonicity of the alternation points $u_i$, $v_j$ with respect to $\alpha$.
		
		\begin{corollary} \label{strict} On any interval of $\alpha$ values excluding tips and endpoints of V-shapes, the alternation points $u_i$, $v_j$, $i=1,2 \dots, k; j=1,2, \dots,\ell$ are strictly increasing in $\alpha$. \end{corollary}
		\begin{proof} By analyticity, if any $u_i$ were to be constant on an interval of $\alpha$ values, then this $u_i$ would be constant for all $\alpha$ up to the next (or previous) endpoint $\gamma$ of a V-shape. But we know by linear transformation that within the V-shape, $u_i'(\alpha)$ will be a non-zero constant. This contradicts the continuity of $u_i'(\alpha)$ at $\alpha=\gamma$. Similarly for the $v_j$.
		\end{proof}
		
		We now turn to Shekhtman's conjecture, to establish that $E_n(\alpha)$ has a local maximum at $\alpha=0$ for $n$ odd.  As above, the Implicit Function Theorem also allows us to find an expression for $E_n'(\alpha)$ in terms of the alternation points. From (\ref{derivatives}) we have
		\begin{equation}\label{deriverror}
			{E'_n(\alpha) = \,-\,\mathcal{R}_{n+2}\,\left[\frac{\partial \textbf{F}}{\partial \alpha}\right]\,,}
		\end{equation}
		where $\mathcal{R}_{n+2}$ stands for the $(n+2)$--{row} of matrix $\mathcal{J}^{-1}$.

		By Cramer's Rule, this can be written $E_n'(\alpha)=\det \, A_{n+2} \, / \det \,A,$ where
		$A_{n+2}$ is the matrix obtained from $A$ by replacing the final column with 
		
		\[\begin{bmatrix} 1 & 1 & \cdots &1& -p_n'(\alpha; \alpha) & -1 \cdots & -1 \end{bmatrix}^T. \]

		Denoting by $\hat{A}_{n+2}$ the matrix obtained from $A_{n+2}$ by reversing the order of the columns, we can represent 
		$E_n'(\alpha)$ as
		\[E_n'(\alpha)= \frac{ \det \, {\hat{A}_{n+2}}}{\det \, \hat{A}}. \]
		
		But notice now that by taking an appropriate linear combination of columns $2$ through $n+1$ in $\hat{A}_{n+2}$, we can produce values of the derivative of the minimax polynomial. Adding this linear combination of columns to the first column will not change the determinant of the matrix, but the new first column will be
		\[ \begin{bmatrix} (p_n'(-1;\alpha)+1) & 0 & 0 & \cdots &0 & (p_n'(1;\alpha)-1) \end{bmatrix}^T. \]
		Then by expansion on this column,
		\[\det \, \hat{A}_{n+2} = (p_n'(-1; \alpha)+1 ) \cdot \mathcal{V}(-1)+ (-1)^{n+3} \, (p_n'(1;\alpha)-1) \cdot \mathcal{V}(1) \]
		So at this point we have the following formula for the derivative of the error function: \begin{equation}
			\label{errorprime}
			E_n'(\alpha)= \frac{(p_n'(-1; \alpha)+1 ) \, \cdot \, \mathcal{V}(-1)+ (-1)^{n+1} (p_n'(1;\alpha)-1)\, \cdot \, \mathcal{V}(1)}{(-1)^k\,\left(\sum_{i=1}^{k+1}\,{\mathcal V}\, (u_i)\,+\, {\mathcal V}\, (\alpha)\,+\,\sum_{j=1}^{\ell+1}\, {\mathcal V}\, (v_j)\right)}
		\end{equation} 
		
		Observe that $\mathcal{V}(1)$ and $\mathcal{V}(-1)$ share as a common factor the product of mutual distances among the $u_i,\alpha, v_j$. This itself is the Vandermonde determinant of $u_k, \cdots, u_1, \alpha, v_1, \cdots, v_\ell$, which we denote by $\mu(\alpha)$.
		
		The remaining factors of $\mathcal{V}(1)$ and $\mathcal{V}(-1)$ are the products of the distances of each alternation point to either $-1$ or $1$ respectively. We give names to these as well:
		\[\delta_{-1}(\alpha) := (\alpha+1) \cdot \prod_{i=1}^k (u_i+1) \cdot \prod_{j=1}^\ell (v_j+1), \]
		\[\delta_1 (\alpha) := (1-\alpha) \cdot \prod_{i=1}^k (1-u_i) \cdot \prod_{j=1}^\ell (1-v_j). \]
		
		Thus we have
		\[ \mathcal{V}(-1)= \mu(\alpha) \cdot \delta_{1}(\alpha),\]
		\[\mathcal{V}(1)= \mu(\alpha) \cdot \delta_{-1}(\alpha). \]
		
		For convenience, we also let $\Delta$ denote the quantity
		\[\Delta(\alpha)= \frac{\mu(\alpha)}{\sum_{i=1}^{k+1}\,{\mathcal V}\, (u_i)\,+\, {\mathcal V}\, (\alpha)\,+\,\sum_{j=1}^{\ell+1}\, {\mathcal V}\, (v_j)}.\]
		
		In order to prove Shekhtman's conjecture, it will be advantageous to divide (\ref{errorprime}) through by $E_n(\alpha)$, to get the following expression for the logarithmic derivative of the error function: \begin{equation}
			\label{logderiv}
			\frac{E_n'(\alpha)}{E_n(\alpha)} = (-1)^k \cdot \Delta (\alpha) \cdot \big{[} \, g_{\alpha} '(-1) \, \delta_1(\alpha) + (-1)^{n+3} \, g_{\alpha}'(1) \, \delta_{-1}(\alpha) \big{]}
		\end{equation} With this we are ready to prove the conjecture.
		
		\begin{theorem}
			\label{Shekhtman}
			
			For each odd $n$, the function $E_n(\alpha)$ has a local maximum at $\alpha=0$.
		\end{theorem}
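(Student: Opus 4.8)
The plan is to differentiate $E_n$ through the logarithmic derivative formula \eqref{logderiv}, determine the sign of $E_n'(\alpha)$ for small $\alpha>0$, and then invoke symmetry. By the symmetry of the minimax problem (approximating $|x-\alpha|$ on $[-1,1]$ is carried by $x\mapsto -x$ into approximating $|x+\alpha|$), one has $E_n(\alpha)=E_n(-\alpha)$ and, at the level of the normalized error \eqref{g alpha}, the parity relation $g_{-\alpha}(x)=g_{\alpha}(-x)$. Thus $E_n$ is even, and by Lemma \ref{lem:continuity} it is continuous at $0$; consequently it suffices to prove $E_n'(\alpha)<0$ for all sufficiently small $\alpha>0$, since evenness then forces $E_n'(\alpha)>0$ for small $\alpha<0$, which gives a local maximum at $\alpha=0$.

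Next I would simplify \eqref{logderiv} for odd $n$. For such $\alpha>0$ the point $\alpha$ is not the tip of, nor interior to, a V-shape of $E_n$ (the degree-$n$ alternation set has the generic $n+2$ points, with $k+l=n-1$ held fixed for small $\alpha$), so \eqref{logderiv} is available; moreover $\alpha$ is not a tip of a V-shape for $E_{n-1}$, so Proposition \ref{prop:outsideguy} applies as well. Since $n$ is odd, $(-1)^{n+3}=1$, and the equioscillation pattern forces $\operatorname{sign} g_\alpha'(-1)=(-1)^{k}$ and $\operatorname{sign} g_\alpha'(1)=(-1)^{l+1}$. Substituting these into \eqref{logderiv} and using $(-1)^{k+l+1}=(-1)^{n}=-1$ absorbs the leading $(-1)^k$ and collapses the bracket to
\[
\frac{E_n'(\alpha)}{E_n(\alpha)}=\Delta(\alpha)\Big[\,|g_\alpha'(-1)|\,\delta_1(\alpha)-|g_\alpha'(1)|\,\delta_{-1}(\alpha)\,\Big].
\]
Because $E_n(\alpha)>0$ and $\Delta(\alpha)>0$ (a quotient of strictly positive Vandermonde expressions), the sign of $E_n'(\alpha)$ equals the sign of the bracket, so everything reduces to proving
\[
|g_\alpha'(-1)|\,\delta_1(\alpha)<|g_\alpha'(1)|\,\delta_{-1}(\alpha),\qquad 0<\alpha<\varepsilon .
\]

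I would obtain this product inequality from two separate comparisons. The easy factor is the pair $\delta_1,\delta_{-1}$: by Theorem \ref{thm:monotonicity} every alternation point is non-decreasing in $\alpha$, so as $\alpha$ increases the distances of $\{\alpha,u_j,v_j\}$ to $+1$ shrink while those to $-1$ grow; since $\delta_1(0)=\delta_{-1}(0)$ by symmetry, this yields $\delta_1(\alpha)<\delta_{-1}(\alpha)$ for $\alpha>0$. The hard part will be the slope comparison $|g_\alpha'(-1)|\le|g_\alpha'(1)|$ for $\alpha\ge 0$ — equivalently, via $g_{-\alpha}(x)=g_\alpha(-x)$, the assertion that $s(\alpha):=|g_\alpha'(-1)|$ is non-increasing as $\alpha$ crosses $0$. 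Heuristically, as the kink moves right the graph of $g_\alpha$ flattens at the farther endpoint $-1$ and steepens at the nearer endpoint $+1$; the mechanism is the exterior extremum of Proposition \ref{prop:outsideguy}, which for $\alpha\in(0,\varepsilon)$ lies to the left of $-1$ and, through its monotone motion together with the no-sign-change property of Remark \ref{nosignchange}, controls $g_\alpha'(-1)$. I expect this slope monotonicity to be the crux and the precise place where Proposition \ref{prop:outsideguy} and Remark \ref{nosignchange} enter, exactly as foreshadowed; once it is secured, multiplying the two comparisons gives the strict inequality, hence $E_n'(\alpha)<0$ on $(0,\varepsilon)$, and the local maximum follows.
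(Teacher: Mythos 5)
Your reduction is sound and follows the paper's route: the use of \eqref{logderiv}, the verification that $\alpha=0$ is not in a V-shape so that the formula applies, the sign bookkeeping $\operatorname{sign} g_\alpha'(-1)=(-1)^k$, $\operatorname{sign} g_\alpha'(1)=(-1)^{l+1}$ with $k=l$ for small $\alpha$, the collapse of the bracket to $|g_\alpha'(-1)|\,\delta_1(\alpha)-|g_\alpha'(1)|\,\delta_{-1}(\alpha)$, and the comparison $\delta_1(\alpha)<\delta_1(0)=\delta_{-1}(0)<\delta_{-1}(\alpha)$ from Theorem \ref{thm:monotonicity} are all correct and match the paper. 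But the decisive step --- the slope comparison $|g_\alpha'(-1)|\le |g_\alpha'(1)|$ --- is exactly where you stop: you call it ``the crux,'' offer a heuristic about the graph flattening at the farther endpoint, and say you ``expect'' the exterior extremum $w(\alpha)$ of Proposition \ref{prop:outsideguy} to control it ``once it is secured.'' That is a genuine gap, not a proof; without it the product inequality, and hence $E_n'(\alpha)<0$, is not established.

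The paper closes this gap not via $w(\alpha)$ but by comparing each endpoint slope to its value at $\alpha=0$, using the difference function $g(x)=g_0(x)-g_\alpha(x)$ from the proof of Theorem \ref{thm:monotonicity}. Remark \ref{nosignchange} gives $\operatorname{sign} g'(-1)=\operatorname{sign} g'(z_k)=(-1)^k$ and $\operatorname{sign} g'(1)=\operatorname{sign} g'(\zeta_l)=(-1)^l=(-1)^k$, whence
\[
(-1)^k g_0'(-1)>(-1)^k g_\alpha'(-1)
\quad\text{and}\quad
(-1)^k g_0'(1)>(-1)^k g_\alpha'(1),
\]
which, after identifying signs by continuity, read $|g_\alpha'(-1)|<|g_0'(-1)|$ and $|g_\alpha'(1)|>|g_0'(1)|$. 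Combined with the symmetry $|g_0'(-1)|=|g_0'(1)|$ this yields $|g_\alpha'(-1)|<|g_\alpha'(1)|$, precisely the inequality you left open. If you want to complete your write-up along your own lines, this two-sided comparison against $\alpha=0$ is the mechanism to adopt; an argument genuinely based on the position and motion of $w(\alpha)$ would need additional work (e.g.\ relating $w(\alpha)\to-\infty$ as $\alpha\to 0^+$ to a quantitative bound on $g_\alpha'(-1)$), which you have not supplied.
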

		
		\begin{proof} For $n$ odd, (\ref{logderiv}) takes the form 
			\[\frac{E_n'(\alpha)}{E_n(\alpha)} = (-1)^k \cdot \Delta (\alpha) \cdot \big{[} \, g_{\alpha} '(-1) \, \delta_1(\alpha) + g_{\alpha}'(1) \, \delta_{-1}(\alpha) \big{]}. \] 
			
			As Lemma \ref{tips} below will indicate, since $\alpha=0$ is a tip of a V-shape for $E_{n-1}(\alpha)$ (cf. \cite{DLT}), it follows that $\alpha=0$ is not within a V-shape of $E_{n}(\alpha)$. If $\alpha=0$ were the endpoint of a V-shape for $E_n(\alpha)$, then by symmetry it would be a tip of a V-shape, which was just excluded. So by Theorem \ref{thm:diff}, $E_n(\alpha)$ is analytic at $\alpha=0$ and the use of (\ref{logderiv}) is justified.
			
			We note that $\Delta(\alpha)$ is always positive, and by symmetry $E_n'(0)=0,$ $g_0'(-1)=-g_0'(1) \neq 0$, and $\delta_1(0)=\delta_{-1}(0).$ Furthermore, by symmetry at $\alpha=0$ we have $k=\ell$, and this persists for $\alpha$ sufficiently close to $0$.
			
			Now we use the first derivative test. We have just mentioned that $E_n'(0)=0$, so if we can establish that $E_n'(\alpha)<0$ for small positive $\alpha$, then the symmetry of the problem will ensure $E_n'(\alpha)>0$ for small $\alpha<0$, and then the critical point $\alpha=0$ will necessarily be a local maximum of $E_n(\alpha)$. So we only need to establish that for small $\alpha>0$, we have 
			\[(-1)^k \, \big{[} \, g_{\alpha} '(-1) \, \delta_1(\alpha) + g_{\alpha}'(1) \, \delta_{-1}(\alpha) \big{]} <0. \]

			To demonstrate this, for a fixed $\alpha>0$ close to $0$, consider the function $G(x)=g_{0}(x)-g_{\alpha}(x)$ as in (\ref{functiong}). In the proof of Theorem \ref{thm:monotonicity}, we demonstrated the existence of points $z_i \in (u_{i+1}(0),u_{i}(0))$, $i=0, 1, \cdots, k$ where $sign \, G'(z_i)=(-1)^i$, and points $\zeta_j \in (v_{j}(\alpha),v_{j+1}(\alpha)),$ $j=0,1,\cdots, \ell$ where $sign \, G'(\zeta_j)=(-1)^j.$ These in turn led to the existence of points $y_i \in (z_{j+1}, z_j)$ and $\eta_j \in (\zeta_j, \zeta_{j+1})$ where $sign \, G''(y_i)=(-1)^{i-1}$ and $sign \, G''(\eta_j)=(-1)^j$. We saw also that all the roots of $G''(x)$ are in the interval $(y_k, \eta_\ell).$
			
			We have that $sign \, G'(-1)= sign \, G'(z_k)= (-1)^k$ (see Remark \ref{nosignchange}).  
			
			In other words
			\[sign \, (-1)^k(g_0'(-1)-g_\alpha'(-1))= 1,\]
			
			or 
			\[(-1)^kg_0'(-1)>(-1)^kg_{\alpha}'(-1).\]
			And since $k=\ell$, we similarly get 
			\[(-1)^kg_0'(1)>(-1)^kg_\alpha '(1).\]
			
			By continuity
			\[sign \, g_0'(-1)= sign \, g_\alpha '(-1) = (-1)^k\]
			
			and
			\[sign \, g_0 '(1) = sign \, g_\alpha '(1) = (-1)^{k+1}.\]
			
			At this point we have
			\[ | \, g_\alpha '(-1) \, |\,= \, (-1)^k \, g_\alpha '(-1) \; < \; (-1)^k \, g_0'(-1)\, = \,|\, g_0'(-1)|\, \]
			as well as	
			\[ -|\,g_\alpha '(1)\,|\,=\, (-1)^k \, g_\alpha ' (1)\;< \; (-1)^k \, g_0'(1)\, = - |\, g_0'(1)\, |.  \]	
			We have also seen by symmetry that $|g_0'(-1)|=|g_0'(1)|,$ and the monotonicity of alternation points in Corollary \ref{strict} makes it clear that $\delta_1(\alpha)<\delta_1(0)=\delta_{-1}(0)<\delta_{-1}(\alpha).$ So finally, combining this all together we have
			\[(-1)^k \, \big{[} \, g_{\alpha} '(-1) \, \delta_1(\alpha) + g_{\alpha}'(1) \, \delta_{-1}(\alpha) \big{]} \, < \, |\,g'_0(-1)\,| \, \cdot \, \delta_1(\alpha) - |\,g'_0(1)\,| \, \cdot \, \delta_{-1}(\alpha), \]
			and the right hand side of this inequality is clearly negative.

		\end{proof}
		
		Having established Shekhtman's conjecture, we formulate the following complementary one.
		
		{\bf Conjecture.} For all natural numbers $n$, the function $E_n(\alpha)$ is concave outside of its V-shapes. For odd $n$, $E_n(\alpha)$ has an absolute maximum at $\alpha=0$.
		
	}
	\section{The V-shapes and the Phase diagram}
	
	We now turn to proving the conjecture of \cite{DLT} that $n-1$ is the exact number of V-shapes in the graph of $E_{n}(\alpha)$. This will follow from a string of lemmas based on studying the dynamics of the external extremum $w(\alpha)=w(\alpha;n)$ of $g_\alpha(x)=g_{\alpha,n}(x)$. { Since degrees $n-1$ and $n$ shall appear together, the explicit dependence on degree is added for clarity.}
	
	\begin{lemma} \label{interlace} A tip of a V-shape of $E_{n-1}(\alpha)$ cannot be within a V-shape of $E_n(\alpha)$.
		\label{tips}
		\begin{proof}  
			
			At the tip of a V-shape of $E_{n-1}(\alpha)$, $( \,p_{n-1}(x;\alpha)-|x-\alpha| \,)$ will have $(n-1)+3-3=n-1$ turning points in the interval $[-1,1].$ This is because in case only $n-2$ turning points were in the interval, no linear transformation of the domain would preserve the minimax property, in contradiction to being at the tip of a V-shape. Similarly in a V-shape of $E_n(\alpha)$, there must be $n$ turning points for $(\,p_n(x;\alpha)-|x-\alpha| \,)$ in the interval $[-1,1]$. 
			
		\end{proof}
		
	\end{lemma}

	\begin{lemma}
		Let $\alpha_0$ be a tip of a V-shape from $E_{n-1}(\alpha)$. Let $w(\alpha;n)$ be the extremum of $g_{\alpha,n}(x) $ which lies outside $[-1,1]$, when such exists. Then  $\lim_{ \alpha \to \alpha_0^+} w(\alpha;n)=-\infty$ and $\lim_{\alpha \to \alpha_0^-} w(\alpha;n)=+\infty$.
		
	\end{lemma}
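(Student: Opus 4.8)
The plan is to reduce the two one-sided limits to the single claim that $|w(\alpha)|\to\infty$ as $\alpha\to\alpha_0$, and then read off the two signs from the monotonicity already in hand. The starting point is a structural fact from \cite{DLT}: at a tip $\alpha_0$ of a V-shape of $E_{n-1}(\alpha)$ the alternation set $\mathcal{A}_{n-1}(\alpha_0)$ is exceptional, with $n+2=(n-1)+3$ points, and there $p_n(\cdot;\alpha_0)=p_{n-1}(\cdot;\alpha_0)$. Writing $p_n(x;\alpha)=c_n(\alpha)x^n+\cdots+c_0(\alpha)$, this means the top coefficient degenerates, $c_n(\alpha_0)=0$, while $c_{n-1}(\alpha_0)$ is the leading coefficient of $p_{n-1}(\cdot;\alpha_0)$. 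I would first check that $c_{n-1}(\alpha_0)\neq0$, i.e. that $p_{n-1}(\cdot;\alpha_0)$ has exact degree $n-1$: if its degree were some $d\le n-2$, then it would also be the degree-$d$ minimax polynomial and would equioscillate at $n+2$ points, forcing $n+2\in\{d+2,d+3\}$ and hence $d\ge n-1$, contradicting $d\le n-2$ (here I use that by \cite{DLT} any minimax has either $m+2$ or $m+3$ alternation points). By Lemma \ref{lem:continuity} and Theorem \ref{thm:diff} the coefficients are continuous up to $\alpha_0$, so $c_n(\alpha)\to0$ and $c_{n-1}(\alpha)\to c_{n-1}(\alpha_0)\neq0$.

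Next I would realize $w(\alpha)$ as an exterior root of a polynomial whose leading coefficient is exactly $c_n(\alpha)$. Since $w\notin[-1,1]$ it lies on one of the two smooth branches of $g_\alpha$, on which $g_\alpha'(x)=(p_n'(x;\alpha)\mp1)/E_n(\alpha)$; thus $p_n'(w;\alpha)=s$ for $s=\pm1$, so $w$ is a root of $\Pi_\alpha(x):=p_n'(x;\alpha)-s$, a polynomial of degree $n-1$ with leading coefficient $n\,c_n(\alpha)$ and next coefficient $(n-1)c_{n-1}(\alpha)$. As $\alpha\to\alpha_0$, $\Pi_\alpha$ converges coefficientwise to $p_{n-1}'(\cdot;\alpha_0)-s$, of exact degree $n-2$. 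By continuity of roots under perturbation, $n-2$ of the $n-1$ roots of $\Pi_\alpha$ stay bounded (converging to the $n-2$ finite roots of the limit), while the sum of all roots, $-(n-1)c_{n-1}(\alpha)/(n\,c_n(\alpha))$, tends to $\pm\infty$; hence exactly one root $R(\alpha)$ satisfies $|R(\alpha)|\to\infty$, and (being the lone escaping root of a real polynomial) it is real.

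I would then identify $R(\alpha)$ with $w(\alpha)$. For $|R(\alpha)|$ large, $R(\alpha)$ lies in the half-line on which $g_\alpha'=(p_n'-s)/E_n$, so $g_\alpha'(R(\alpha))=0$ and $R(\alpha)$ is an extremum of $g_\alpha$ outside $[-1,1]$; choosing the branch $s$ so that the escaping root falls on the correct side, and invoking the uniqueness of the exterior extremum from the proof of Proposition \ref{prop:outsideguy}, gives $R(\alpha)=w(\alpha)$, so $|w(\alpha)|\to\infty$. The signs now follow at no extra cost from Proposition \ref{prop:outsideguy}, which asserts that $w(\alpha)$ is increasing on each interval on which it exists: on $(\alpha_0-\varepsilon,\alpha_0)$ an increasing, unbounded function must tend to $+\infty$, so $\lim_{\alpha\to\alpha_0^-}w(\alpha)=+\infty$; on $(\alpha_0,\alpha_0+\varepsilon)$ an increasing function bounded above by its value at $\alpha_0+\varepsilon$ can only be unbounded below, so $\lim_{\alpha\to\alpha_0^+}w(\alpha)=-\infty$. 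This sidesteps any explicit computation of the sign of $c_n(\alpha)$ on either side of $\alpha_0$.

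The main obstacle is the clean identification of the escaping root with $w$. Two points need care: that exactly one root escapes (which is why I first secure $c_{n-1}(\alpha_0)\neq0$, so that the limiting polynomial has degree exactly $n-2$), and that this escaping root is genuinely the exterior extremum rather than a spurious large root sitting on the wrong branch. Both rest on the uniqueness of the external critical point extracted from Proposition \ref{prop:outsideguy} together with continuity of roots; the complementary fact that the interior alternation points $u_i(\alpha),v_j(\alpha)$ converge into $[-1,1]$ (guaranteed by Theorem \ref{thm:diff}) is the routine part.
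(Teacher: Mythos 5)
Your proof is correct in substance but reaches the key fact $|w(\alpha)|\to\infty$ by a genuinely different mechanism than the paper. The paper's argument is: $w$ is monotone on each side of $\alpha_0$, so the one-sided limits exist in $[-\infty,+\infty]$; if a limit $\omega$ were finite, then writing $g_\alpha'(x)=q_\alpha(x)$ on the relevant branch and using continuity (in fact analyticity) of the coefficients $b_j(\alpha)$ of $q_\alpha$, one gets $q_{\alpha_0}(\omega)=0$, i.e.\ $g_{\alpha_0}$ has a critical point outside $[-1,1]$, which is then asserted to be impossible at a tip of a V-shape of $E_{n-1}$. You instead exploit the algebraic degeneration at the tip: $p_n(\cdot;\alpha_0)=p_{n-1}(\cdot;\alpha_0)$ forces $c_n(\alpha_0)=0$ while $c_{n-1}(\alpha_0)\neq0$ (your alternation-count argument for this is fine), so the degree-$(n-1)$ polynomial $p_n'-s$ degenerates to degree $n-2$ and exactly one (necessarily real) root escapes to infinity. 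This buys you something the paper leaves implicit — a concrete, checkable reason why no finite limit is possible — and it meshes nicely with the phase-diagram remark that the leading coefficient changes sign across $\alpha_0$. Both proofs then extract the signs of the two limits from the monotonicity of $w$ in Proposition \ref{prop:outsideguy} in the same way. The one step you should tighten is the identification of the escaping root with $w(\alpha)$: the escaping root of $p_n'-s$ is only a critical point of $g_\alpha$ if it lies on the side of $\alpha$ where that branch of $s=\mathrm{sign}(x-\alpha)$ is in force, and a priori it could land on the wrong side for both choices of $s$. This is easily excluded: the escaping roots of $p_n'-1$ and $p_n'+1$ are both asymptotic to $-\tfrac{(n-1)c_{n-1}(\alpha)}{n\,c_n(\alpha)}$ (the constant term does not affect the leading behaviour of the escaping root), so they run off to the \emph{same} infinity, and whichever infinity that is, the corresponding choice of $s$ produces a genuine external critical point of $g_\alpha$, which equals $w(\alpha)$ by uniqueness of the exterior extremum. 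With that sentence added, your argument is complete.
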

	
	\begin{proof}
		By Lemma \ref{interlace} $\alpha_0$ is not within a V-shape of $E_n(\alpha)$.Therefore, we know that $w(\alpha;n)$ exists for all $\alpha$ in a punctured neighborhood of $\alpha_0$, and that it is monotone increasing in $\alpha$ on either side of $\alpha_0$ .  
		
		Assume now that $w(\alpha;n)>1$, and let $q_{\alpha,n}(x):=\frac{p_n'(x;\alpha)-1}{E_n(\alpha)}$. For $x>\alpha$ we have $g_{\alpha,n}'(x)=q_{\alpha,n} (x),$ and in particular $q_{\alpha,n}(w(\alpha;n))=0$.
		
		Since the coefficients of the minimax polynomial along with the minimax error are analytic in $\alpha$ in a neighborhood of $\alpha_0$, we know that for some analytic functions $b_j(\alpha;n)$, $j=0,1,\cdots, n-1$, we have for all $x$: \begin{equation}\label{qAnaliticity}
			q_{\alpha,n} (x)= \sum_{j=0}^{n-1}b_j(\alpha;n)x^j.  
		\end{equation}
		
		In the proof of Proposition \ref{prop:outsideguy} we established that $g_{\alpha,n}'' (x)$, a polynomial of degree $n-2$, has $n-3$ roots in $(-1,1)$. Since $g_{\alpha,n}$ has an extremum at $w(\alpha;n)$, if $w(\alpha;n) $ is a root of $ g_{\alpha,n}'' (x)$, it has to be of multiplicity at least two, which is a contradiction. Therefore, $g_{\alpha,n} '' (w(\alpha;n))=q_{\alpha,n}  ' (w(\alpha;n))\not= 0$. Together with \eqref{qAnaliticity}, this implies that $w(\alpha;n)$ is analytic in $\alpha$ when it exists.  
		
		As $w(\alpha;n)$ is monotone, we also know that $\omega=\lim_{\alpha \to \alpha_0^-}w(\alpha;n)$ exists. Assume for contradiction that $\omega$ were finite. In that case, we should have:
		\[q_{\alpha_0,n}(\omega)=\sum_{j=1}^{n-1}b_j(\alpha_0;n)\omega^j=\lim_{\alpha \to \alpha_0^-} \sum_{j=1}^{n-1} b_j(\alpha;n) w(\alpha;n)^j \]
		\[= \lim_{\alpha \to \alpha_0^-} q_{\alpha,n}(w(\alpha;n))=\lim_{\alpha \to \alpha_0^-} 0 = 0. \]
		
		Hence $g_{\alpha_0,n}$ has a critical point outside $[-1,1]$, which is impossible at the tip of a V-shape (recall that in this case $g_{\alpha_0,n}\equiv g_{\alpha_0,n-1}$). So we must have $\lim_{\alpha \to \alpha_0^-}w(\alpha;n)=+\infty$.  The remaining cases are handled similarly.
		
	\end{proof}
	
	\begin{lemma}
		\label{endpoint}
		Given a V-shape of $E_n(\alpha)$ with left endpoint $\gamma$ and right endpoint $\beta$, we have
		\[\lim_{\alpha \to \beta^+} w(\alpha;n)=1, \]
		
		and 
		\[\lim_{\alpha \to \gamma^-} w(\alpha;n)=-1.  \]
	\end{lemma}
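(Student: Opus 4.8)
The plan is to prove the statement at the right endpoint $\beta$; the assertion at the left endpoint $\gamma$ then follows from the symmetry $\alpha\mapsto-\alpha$ of the minimax problem. For $\alpha$ just to the right of $\beta$ we are in the analytic regime, so by Proposition \ref{prop:outsideguy} the external extremum $w(\alpha)$ exists, is monotonically increasing in $\alpha$, and (being the extremum that leaves $[-1,1]$ through the right end as $\alpha$ decreases into the V-shape) satisfies $w(\alpha)>1$. Consequently $\omega:=\lim_{\alpha\to\beta^+}w(\alpha)=\inf_{\alpha>\beta}w(\alpha)$ exists, and since $1\le w(\alpha)\le w(\alpha_1)$ for any fixed $\alpha_1>\beta$, it is finite with $\omega\in[1,\infty)$. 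Writing $q_\alpha(x)=\frac{p_n'(x;\alpha)-1}{E_n(\alpha)}$ as in the previous lemma, I have $q_\alpha(w(\alpha))=0$; letting $\alpha\to\beta^+$ and using the continuity of the coefficients of $p_n$ together with $E_n(\beta)>0$ (Lemma \ref{lem:continuity}), I obtain $q_\beta(\omega)=0$. Since $\omega\ge 1>\beta$, this says $g_\beta'(\omega)=0$.

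The decisive ingredient is the behaviour on the V-shape side, which I would use to show $g_\beta'(1)=0$. Inside the V-shape the rightmost alternation point is an interior turning point $b_0(\alpha)<1$ with $g_\alpha'(b_0(\alpha))=0$. By the continuity of the alternation set (Lemma \ref{lem:continuity} together with the monotonicity of Theorem \ref{thm:monotonicity}), as $\alpha\to\beta^-$ this rightmost alternation point tends to the rightmost point of $\mathcal{A}_n(\beta)$; approached from the analytic side, however, that rightmost point is $+1$. Hence $b_0(\beta)=1$, and passing to the limit in $q_\alpha(b_0(\alpha))=0$ gives $q_\beta(1)=0$, i.e. $g_\beta'(1)=0$.

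To finish I would rule out $\omega>1$ by a zero count for $g_\beta''=p_n''(\cdot;\beta)/E_n(\beta)$, which is a single polynomial of degree $n-2$ on $\R\setminus\{\beta\}$. If $\omega>1$, then $g_\beta'$ vanishes at the $n+1$ points $u_k(\beta)<\cdots<u_1(\beta)<\beta<v_1(\beta)<\cdots<v_l(\beta)<1<\omega$ (recall $k+l=n-1$), the zero at $1$ coming from the V-shape side and the zero at $\omega$ from the analytic side. Among the $n$ consecutive gaps determined by these points, only the one straddling $\beta$ fails to lie in a single half-line $(-\infty,\beta)$ or $(\beta,\infty)$ on which $g_\beta'$ is smooth; applying Rolle's theorem on each of the remaining gaps yields at least $n-1$ zeros of $g_\beta''$, exceeding its degree $n-2$. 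This contradiction forces $\omega=1$, and hence $\lim_{\alpha\to\beta^+}w(\alpha)=1$.

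The step I expect to be the main obstacle is the phase-transition fact $g_\beta'(1)=0$, equivalently $b_0(\beta)=1$: it requires knowing that inside the V-shape the rightmost alternation point is a genuine interior critical point and that the alternation set varies continuously across the endpoint $\beta$, so that this critical point lands exactly at $+1$ in the limit. Once this is in hand, it is precisely the extra critical point at $x=1$ that makes the Rolle count overshoot the degree of $g_\beta''$ and thereby excludes $\omega>1$.
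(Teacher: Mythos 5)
Your proposal is correct in outline and shares the paper's strategic skeleton: force a critical point of $g_\beta$ at $x=1$ using the side $\alpha<\beta$ inside the V-shape, use monotonicity of $w$ to get existence of the one-sided limit $\omega$, and then show $\omega$ can only be $1$. The two key sub-steps are executed differently, though. For the critical point at $1$, the paper first argues $|g_\beta(1)|=1$ directly from the definition of a V-shape endpoint (if $|g_\beta(1)|<1$, a small linear rescaling of the domain preserves the alternation configuration, so $\beta$ would be interior to the linear piece), and then obtains $g_\beta'(1)=0$ by combining the one-sided inequality $q_\beta(1)\le 0$, gotten by passing to the limit in $q_\alpha<0$ just to the right of $1$ for $\alpha$ inside the V-shape, with $g_\beta'(1)\ge 0$ forced by maximality of $g_\beta$ at $x=1$. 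You instead track the rightmost interior alternation point $b_0(\alpha)$ and send it to $1$. For the exclusion of $\omega>1$, the paper is terser --- it appeals to $g_\alpha$ having only one external extremum and ``matching'' it with the extremum of $g_\beta$ at $1$ --- whereas your Rolle/degree count on $g_\beta''$ makes that step fully explicit, which is if anything tighter than the published argument; note that the same count also disposes of the possibility $w(\alpha)<-1$ near $\beta^+$, which you assert by orientation rather than prove.

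The one step to shore up is the one you flagged: $b_0(\alpha)\to 1$. Lemma \ref{lem:continuity} gives continuity of $p_n(\cdot;\alpha)$ and $E_n(\alpha)$, not of the alternation set, so ``continuity of the alternation set'' is not available off the shelf. What you can say instead: monotonicity gives that $b^*=\lim_{\alpha\to\beta^-}b_0(\alpha)$ exists with $b^*\le 1$, and uniform convergence of $g_\alpha$ to $g_\beta$ gives $|g_\beta(b^*)|=1$ and $g_\beta'(b^*)=0$; since $1\in\mathcal{A}_n(\beta)$ from the right-hand side and $\beta$, not being a tip, carries exactly $n+2$ alternation points by \cite{DLT}, the case $b^*<1$ would produce an extra point where $|g_\beta|=1$, a contradiction. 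Alternatively, inside the V-shape the alternation points are given explicitly by the linear change of variables, from which $b_0(\alpha)\to 1$ can be read off. Either patch uses only the paper's toolkit. A smaller point: your degree count treats $g_\beta''$ as having exact degree $n-2$; if $\deg p_n(\cdot;\beta)<n$ the contradiction is only stronger ($n-1$ distinct zeros force $p_n''\equiv 0$, hence $p_n$ linear, which cannot support the equioscillation), so nothing is lost, but it deserves a sentence.
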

	
	\begin{proof}
		Note that $g_{\beta,n}(x)$ must have a local extremum at $x=1.$ For if $|g_{\beta,n}(1)|<1$, then by either shrinking or expanding the domain slightly by a linear transformation we could keep the needed number of alternation points within the interval $[-1,1].$ Then $\beta$ would be an interior point of a linear part of a V-shape, instead of an endpoint.
		
		Thus, without loss of generality, assume $g_{\beta,n} (1)=1$. Observe that there is $\epsilon >0$ such that for $\alpha \in (\beta-\epsilon, \beta)$ we have $g_{\alpha,n} ' (x)=q_{\alpha,n} (x)<0$ for $x\in [1,(\beta+1+\epsilon)/(\beta+1-\epsilon)]$. Continuity of the coefficients $b_j (\alpha;n)$ in \eqref{qAnaliticity} implies $g_{\beta,n} '(x)=q_{\beta,n} (x)\leq 0$ in this subinterval. Therefore, we conclude that $g_{\beta,n}'(1)=0$ and $x=1$ is a local maximum for $g_{\beta,n}(x)$.

		Now since $w(\alpha,n)$ is monotone in the interval $(\beta, \beta +\epsilon)$ for any small $\epsilon$, we have that $\lim_{ \alpha \to \beta^+} w(\alpha,n)$ exists. By continuity of the minimax polynomial coefficients, and since $g_{\alpha,n}$ has only one external extremum for such $\alpha$, we must conclude that $\lim_{\alpha \to \beta^+}w(\alpha)=1$, to match the location of the extremum of $g_{\beta,n}$ at $x=1$.
		
		The other limit in the statement of the Lemma may be analyzed similarly.	
	\end{proof}
	
	Next we derive an {\em interlacing property} of the V-shapes of $E_{n-1}(\alpha)$ and $E_n (\alpha)$.
	
	\begin{lemma}\label{L4.6}
		The following hold:
		
		\begin{itemize}
			
			\item Between any two consecutive tips of V-shapes of $E_{n}(\alpha)$, there is a tip of a V-shape of $E_{n-1}(\alpha)$.
			
			\item  Between any two consecutive tips of V-shapes of $E_{n-1}(\alpha)$, there is a tip of a V-shape of $E_{n}(\alpha).$
			
			\item Between the rightmost V-shape of $E_{n-1}(\alpha)$ and $\alpha=1$, there is a V-shape of $E_n(\alpha)$.
			
			\item Between $\alpha=-1$ and the leftmost V-shape of $E_{n-1}(\alpha)$, there is a V-shape of $E_n(\alpha).$  
			
		\end{itemize}
		
	\end{lemma}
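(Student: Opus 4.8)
The plan is to deduce all four statements from the global behaviour of the external extremum $w(\alpha)$ as $\alpha$ sweeps across $(-1,1)$, by packaging the three facts already proved into a single dynamical picture. Those facts are: $w(\alpha)$ is monotone increasing on every interval where it is defined (Proposition~\ref{prop:outsideguy}); at each tip $\alpha_0$ of a V-shape of $E_{n-1}$ one has $\lim_{\alpha\to\alpha_0^-}w(\alpha)=+\infty$ and $\lim_{\alpha\to\alpha_0^+}w(\alpha)=-\infty$ (the lemma above on the limits of $w$ at tips of $E_{n-1}$); and at the left and right endpoints $\gamma,\beta$ of a V-shape of $E_n$ one has $\lim_{\alpha\to\gamma^-}w(\alpha)=-1$ and $\lim_{\alpha\to\beta^+}w(\alpha)=+1$ (Lemma~\ref{endpoint}). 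Reading these together, as $\alpha$ increases $w$ climbs monotonically through $\R\setminus[-1,1]$, and since $w$ is continuous wherever defined while $\R\setminus[-1,1]$ is disconnected, on any interval free of tips of $E_{n-1}$ and of V-shapes of $E_n$ the value $w$ stays on a single branch, either $(1,\infty)$ or $(-\infty,-1)$. A change of branch can occur in exactly two ways: a \emph{downward reset} from $(1,\infty)$ to $(-\infty,-1)$, occurring precisely at a tip of $E_{n-1}$ (where $w$ runs off to $+\infty$ and returns from $-\infty$), and an \emph{upward crossing} from $(-\infty,-1)$ to $(1,\infty)$, occurring precisely across a V-shape of $E_n$ (where $w$ reaches $-1$ at the left endpoint, is undefined along the V-shape, and re-emerges at $+1$). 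Since every V-shape carries a tip, locating a crossing locates a tip.

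The two interior statements then follow from this bookkeeping together with monotonicity. For consecutive tips $\tau_1<\tau_2$ of $E_{n-1}$, just right of $\tau_1$ we have $w\to-\infty$ (lower branch) and just left of $\tau_2$ we have $w\to+\infty$ (upper branch), while $(\tau_1,\tau_2)$ contains no further tip of $E_{n-1}$ and hence no reset; the change of branch from lower to upper must therefore be realized by an upward crossing, i.e. a V-shape of $E_n$ whose tip lies in $(\tau_1,\tau_2)$. For consecutive V-shapes of $E_n$, the first ending at $\beta_1$ and the second beginning at $\gamma_2$, we have $w\to+1$ as $\alpha\to\beta_1^+$ (upper branch) and $w\to-1$ as $\alpha\to\gamma_2^-$ (lower branch), while $(\beta_1,\gamma_2)$ contains no V-shape of $E_n$ and hence no crossing; a monotone increasing $w$ can pass from the upper to the lower branch only by a downward reset, i.e. a tip of $E_{n-1}$ in $(\beta_1,\gamma_2)$.

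The two boundary statements are the same crossing argument applied on the end intervals, and the single ingredient not yet available is the behaviour of $w$ near $\alpha=\pm1$; supplying it is the main obstacle, since every prior lemma speaks only of tips of $E_{n-1}$ or of endpoints of V-shapes of $E_n$. I would prove that $\lim_{\alpha\to1^-}w(\alpha)=+\infty$ and, by the reflection symmetry $w(-\alpha)=-w(\alpha)$ of the minimax problem, that $\lim_{\alpha\to-1^+}w(\alpha)=-\infty$. For the first limit, note that $f(x;1)=1-x$ is itself a polynomial, so $E_n(\alpha)\to0$ and, by continuity of the minimax data (Lemma~\ref{lem:continuity}), $p_n(x;\alpha)\to1-x$ as $\alpha\to1^-$; since in this regime $w>1$ is a zero of $p_n'(x;\alpha)-1$ on the right of $\alpha$, and $p_n'(x;\alpha)-1\to-2$ uniformly on bounded sets, every such zero must escape to infinity, and monotonicity together with $w>1$ pins the limit at $+\infty$.

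Granting these endpoint limits, the statement about the final V-shape of $E_{n-1}$ is the upward-crossing argument on the interval from the final tip $\tau$ of $E_{n-1}$ (where $w\to-\infty$) to $\alpha=1$ (where $w\to+\infty$): there is no reset in between, so the change of branch is realized by a V-shape of $E_n$, which therefore sits between the final V-shape of $E_{n-1}$ and $\alpha=1$. The statement about the first V-shape of $E_{n-1}$ is identical after the reflection $\alpha\mapsto-\alpha$, using $w\to-\infty$ at $\alpha=-1^+$ and $w\to+\infty$ just left of the first tip of $E_{n-1}$. In each case monotonicity in fact shows that the crossing or reset is unique, so the interlacing is exact; only the existence asserted here is needed.
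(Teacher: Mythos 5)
Your treatment of the two interior statements is essentially the paper's own argument: you track the external extremum $w(\alpha)$, use its monotonicity (Proposition \ref{prop:outsideguy}), the limits $\pm\infty$ at tips of V-shapes of $E_{n-1}$, and the limits $\pm 1$ at endpoints of V-shapes of $E_n$ (Lemma \ref{endpoint}), and observe that a monotone branch-switch forces either a ``reset'' (tip of $E_{n-1}$) or a ``crossing'' (V-shape of $E_n$). That matches the paper exactly, including the appeal to analyticity of $w$ to rule out other discontinuities. Where you genuinely diverge is in the two boundary statements: the paper simply cites \cite[Theorem 2]{DLT}, which locates the final V-shape of $E_n$ explicitly in the Chebyshev regime near $\alpha=1$, whereas you derive $\lim_{\alpha\to 1^-}w(\alpha)=+\infty$ from continuity of the minimax data ($E_n(\alpha)\to 0$, $p_n(\cdot;\alpha)\to 1-x$, so zeros of $p_n'(x;\alpha)-1$ escape every compact set) and then rerun the crossing argument. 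This is a nice uniformization --- all four bullets become instances of one branch-switching principle --- but it is not quite as self-contained as you present it. Your limit computation only controls the zeros of $p_n'(x;\alpha)-1$ on $(1,\infty)$; it says nothing about zeros of $p_n'(x;\alpha)+1$ on $(-\infty,-1)$, since that quantity tends to $0$ rather than to something bounded away from $0$. So you must separately justify the parenthetical assertion that $w(\alpha)>1$ near $\alpha=1$, i.e.\ that the external extremum sits to the right. That in turn requires knowing $l=0$ there (so that all $n-1$ zeros of $p_n'+1$ are the $u_i\in(-1,1)$ and none is available to serve as an external extremum on the left), and the natural source for that structural fact is precisely \cite[Theorem 2]{DLT}. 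With that one assertion supplied --- either by citing \cite{DLT} as the paper does or by a short counting argument for the alternation set near $\alpha=1$ --- your version is correct and arguably cleaner, since it makes the boundary cases formally identical to the interior ones.
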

	
	\begin{proof}
		
		The proof hinges on tracking the progress of $w(\alpha,n)$, the external extremum of $g_{\alpha,n}$, as $\alpha$ increases.
		
		For the first statement, let $\alpha_0$ and $\alpha_1$ be consecutive tips of V-shapes of $E_{n}(\alpha)$.  As we let $\alpha$ grow from $\alpha_0$, it will eventually exit the V-shape of $\alpha_0$, and then we will observe $w(\alpha;n)$ growing monotonically from a value of $1$. But as $\alpha$ approaches $\alpha_1$, it will enter the V-shape of $\alpha_1$ from the left side, and so we observe $w(\alpha;n)$ growing monotonically to a value of $-1$. Hence, there must have been a discontinuity of $w(\alpha;n)$ somewhere in between.  But $w(\alpha;n)$ is analytic wherever it exists, and it exists everywhere outside the V-shapes of $E_n(\alpha)$, except at tips of V-shapes of $E_{n-1}(\alpha).$ So, we must conclude that there occurred a tip of a V-shape of $E_{n-1}(\alpha)$, where $w(\alpha;n)$ jumped from $+\infty$ to $-\infty$.
		
		For the second statement, let $\alpha_0$ and $\alpha_1$ now be two consecutive tips of V-shapes of $E_{n-1}(\alpha).$  As $\alpha$ grows from $\alpha_0$ to $\alpha_1$, we see from Lemma \ref{endpoint} that $w(\alpha;n)$ starts from $-\infty$ at $\alpha_0$, and tends to $+\infty$ at $\alpha_1$. Somewhere in between, we see by continuity that $w(\alpha;n)=-1$, and we enter a V-shape of $E_n(\alpha) $.

		The third statement follows directly from \cite[Theorem 2]{DLT} where the location of the rightmost V-shape of $E_n(\alpha)$ is determined explicitly and the fourth statement follows by the even symmetry of the problem.

	\end{proof}

	Now we may finally establish the number of V-shapes of $E_n$.
	
	\begin{theorem}
		\label{thm:vshapes}
		For each $n \in \mathbb{N}$, the number of V-shapes in the graph of $E_n(\alpha)$ is exactly $n-1$.
	\end{theorem}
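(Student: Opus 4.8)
The plan is to prove the statement by induction on $n$, turning the interlacing property of Lemma~\ref{L4.6} into a two-sided count. Since each V-shape contributes exactly one tip and, by Theorem~\ref{thm:diff}, distinct V-shapes have distinct isolated tips (off the tips $E_n$ is real analytic, hence not piecewise linear), counting V-shapes is the same as counting tips; I write $V_n$ for this common number on $(-1,1)$.

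First I would dispose of the base cases by hand: direct computation gives that $E_1(\alpha)$ is quadratic, so $V_1=0=1-1$, and from \cite{DLT} (see Figure~\ref{fig:2case}) $E_2(\alpha)$ has a single V-shape at $\alpha=0$, so $V_2=1=2-1$. The induction then runs for $n\ge 3$, assuming $V_{n-1}=n-2$ and labeling the tips of $E_{n-1}$ as $t_1<\cdots<t_{n-2}$.

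For the upper bound I would invoke the first bullet of Lemma~\ref{L4.6}: between any two consecutive tips of $E_n$ there lies a tip of $E_{n-1}$, and tips landing in disjoint gaps are distinct, so the $V_n-1$ gaps yield $V_n-1\le V_{n-1}=n-2$, i.e.\ $V_n\le n-1$. For the matching lower bound I would use the remaining three bullets to place a tip of $E_n$ in each of the $n-1$ pairwise disjoint regions $(-1,t_1)$, $(t_1,t_2),\ldots,(t_{n-3},t_{n-2})$, $(t_{n-2},1)$: bullet four supplies one in $(-1,t_1)$, bullet two one in each of the $n-3$ interior gaps, and bullet three one in $(t_{n-2},1)$. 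Disjointness makes these tips distinct, so $V_n\ge n-1$, and combining the bounds gives $V_n=n-1$.

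The real content sits entirely in Lemma~\ref{L4.6}, so I expect no serious obstacle here beyond bookkeeping; the one point to watch is that bullets three and four presuppose that $E_{n-1}$ has a genuine first and last V-shape, which fails when $V_{n-1}=0$. This is exactly why the induction must be anchored at $n=2$ rather than $n=1$, with the low-degree cases handled by the direct computations above.
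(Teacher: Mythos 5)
Your proposal is correct and follows essentially the same route as the paper: an induction anchored in the low-degree cases, with the lower bound $V_n\ge n-1$ obtained by placing a tip of $E_n$ in each of the $n-1$ gaps determined by the tips of $E_{n-1}$ (the last three bullets of Lemma \ref{L4.6}) and the upper bound $V_n\le n-1$ by the pigeonhole argument from the first bullet. The only cosmetic difference is that the paper starts the induction at $n\ge 4$ after also citing $\mathcal{N}(3)=2$ from \cite{DLT}, whereas you start at $n\ge 3$; your observation about why the induction cannot be anchored at $n=1$ is a sensible point of care but does not change the substance.
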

	
	\begin{proof}
		
		For each $j \in \mathbb{N}$, define $\mathcal{N}(j)$ to be the number of V-shapes in the graph of $E_j(\alpha)$.
		The case $\mathcal{N}(1)=0$ is immediate since $E_1(\alpha)$ is quadratic, and the cases $\mathcal{N}(2)=1$ and $\mathcal{N}(3)=2$ are established in \cite{DLT}.
		
		Now consider an integer $n \geq 4$. For induction, assume that $\mathcal{N}(n-1)=n-2,$ and let the corresponding tips of V-shapes of $E_{n-1}(\alpha)$ be called $\alpha_1 < \alpha_2 < \cdots < \alpha_{n-2}.$ By Lemma \ref{L4.6} there is at least one V-shape of $E_n(\alpha)$ in each of the intervals $(-1, \alpha_1), (\alpha_1, \alpha_2), \cdots, (\alpha_{n-2}, 1)$. Hence $\mathcal{N}(n) \geq n-1$.
		
		Likewise, between any two V-shapes of $E_n(\alpha)$ must occur at least one of the $\alpha_k$.  By the pigeon-hole principle $\mathcal{N}(n) \leq n-1$, which finishes the induction.
	\end{proof}
	
	{
		To finish, we use Theorem \ref{thm:monotonicity} and Proposition \ref{prop:outsideguy} to completely describe the phase transitions of the alternation points as $\alpha$ increases from $-1$ to $+1$. 
		
		\begin{figure}
			\begin{center}
				\includegraphics[scale=.4]{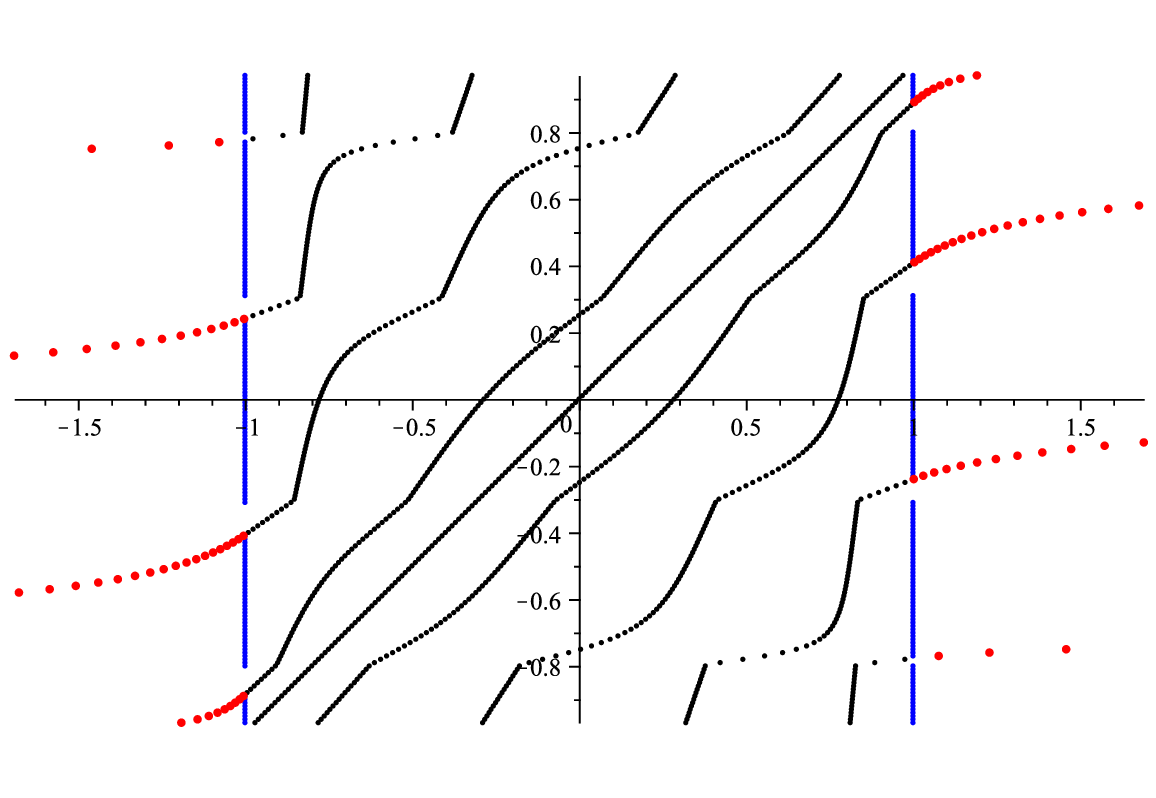}
			\end{center}
			\caption{Phase transitions of the alternation points and the external extremum $w(\alpha)$ (red) in terms of $\alpha$. Values of $\alpha$ are plotted vertically from -1 to +1; at each value of $\alpha$ the locations of the corresponding alternation points are marked horizontally together with $w(\alpha)$ when it exists.}
			\label{fig:phase}
		\end{figure}
		
		The following pattern repeats through each of the V-shapes of $E_n(\alpha)$. Throughout a V-shape, an external extremum $w(\alpha)$ will cross $x=-1$, and replace $-1$ as the leftmost alternation point. Then at the tip of the V-shape, $-1$ will again become an alternation point, making for $n+3$ alternation points. Then $x=+1$ loses its status as an alternation point, until at the end of the V-shape, $x=+1$ will again become an alternation point as an external extremum $w(\alpha)$ emerges from $x=+1$ and moves to the right. After this process, the rightmost $v_\ell$ will have exited the interval and fail to exist, and we will have gained one more $u_{k+1}$. En route to the next V-shape, there will come a point where the previous degree error $E_{n-1}(\alpha)$ has a tip of its V-shape intersecting the graph of $E_n(\alpha)$. At this point, the external extremum $w(\alpha)$ increases without bound and jumps from $+\infty$ to $-\infty$. Then $w(\alpha)$ moves in toward $x=-1$ to begin another phase transition.  
		
		These transitions for $n=5$ are pictured in Figure \ref{fig:phase}. The values of $\alpha$ are plotted vertically from $\alpha=-1$ to $\alpha=+1$ in increments of $0.01$. At each value of $\alpha$, the positions of the alternation points and of $w(\alpha)$ are marked horizontally between $x=-1.7$ and $x=+1.7$. (For reference, the diagonal line through the origin is the alternation point $\alpha$ itself and the vertical bars appear whenever $\pm 1$ are alternation points.)}
	
	\
	
	{\bf Acknowledgment:} We would like to extend our sincere thanks to the anonymous referees, whose thoughtful comments and reviews allowed us to enhance the exposition, as well as to V. Totik and A. Aptekarev for bringing to our attention the works in Russian of Bernstein \cite{Bernstein2} and Nikolsky \cite{N}.
	
	\bibliography{allDLO}
	\bibliographystyle{}

\end{document}